\newcommand{\blind}{1}
\newtheorem{theorem}{Theorem}[section]
\newtheorem{lemma}[theorem]{Lemma}
\newtheorem{corollary}[theorem]{Corollary}
\newtheorem{proposition}[theorem]{Proposition}
\newtheorem{remark}[theorem]{Remark}
\newtheorem{assumption}[theorem]{Assumption}
\DeclareMathOperator{\RE}{RE}
\DeclareMathOperator{\GRE}{GRE}
\DeclareMathOperator{\tr}{tr}
\newcommand{\vertiii}[1]{{\left\vert\kern-0.25ex\left\vert\kern-0.25ex\left\vert #1 
    \right\vert\kern-0.25ex\right\vert\kern-0.25ex\right\vert}}
\title{High-dimensional (Group) Adversarial Training in Linear Regression}
\begin{document}
\def\spacingset#1{\renewcommand{\baselinestretch}%
{#1}\small\normalsize} \spacingset{1}
\if1\blind
{
  \title{\bf High-dimensional (Group) Adversarial Training in
Linear Regression}
  \author{Yiling Xie\\
    School of Industrial and Systems Engineering,\\  Georgia Institute of Technology,\\
    and \\
    Xiaoming Huo \\
    School of Industrial and Systems Engineering,\\  Georgia Institute of Technology.}
  \maketitle
} \fi

\if0\blind
{
  \bigskip
  \bigskip
  \bigskip
  \begin{center}
    {\LARGE  }
\end{center}
  \medskip
} \fi

\doublespacing

\maketitle

\begin{abstract}
Adversarial training can achieve robustness against adversarial perturbations and has been widely used in machine learning models.
This paper delivers a non-asymptotic consistency analysis of the adversarial training procedure under $\ell_\infty$-perturbation in high-dimensional linear regression.
It will be shown that the associated convergence rate of prediction error can achieve the minimax rate up to a logarithmic factor in the high-dimensional linear regression on the class of sparse parameters.
Additionally, the group adversarial training procedure is analyzed.
Compared with classic adversarial training, it will be proved that the group adversarial training procedure enjoys a better prediction error upper bound under certain group-sparsity patterns.
\end{abstract}

\section{Introduction}

Adversarial training is proposed to hedge against adversarial perturbations and
has attracted much research interest in recent years.
In modern machine-learning applications, while the empirical risk minimization procedure optimizes the empirical loss, the widely-used adversarial training procedure seeks conservative solutions that optimize the worst-case loss under a given magnitude of perturbation.
People have actively investigated model modifications and algorithmic frameworks to improve performance and training efficiency for adversarial training under different problem settings \cite{andriushchenko2020understanding, gao2024a,hendrycks2019using,robey2024adversarial,shafahi2019adversarial, wang2023better,xing2021algorithmic}.

We are interested in understanding the fundamental properties of adversarial training from a statistical viewpoint.
A standard approach for statisticians to evaluate statistical or machine-learning models is to investigate whether the estimator obtained from the model can achieve the minimax rate \cite{vapnik1998statistical}.
In this paper, we will give the non-asymptotic convergence rate of the prediction error in high-dimensional adversarial training. 
The associated convergence rate achieves the minimax rate under certain settings, which will be clarified in Section \ref{subsectionconat}.

In machine-learning models, adversarial training has the following mathematical formulation:
\[\min_{\beta}\frac{1}{n}\sum_{i=1}^n \sup_{\Vert\Delta\Vert\leq \delta} L\left(\bm{X}_i+\Delta,Y_i,\beta\right),\]
 where
 $(\bm{X}_1, Y_1),...,(\bm{X}_n, Y_n)$ are given samples,
 $\Delta$ is the perturbation, $\|\cdot\|$ is the perturbation norm,
 $\delta$ is the perturbation magnitude,
 $\beta$ is the model parameter,
 and $L(\bm{x},y,\beta)$ is the loss function
with $\bm{x}$ being the input variable and $y$ being the response variable.

Regarding the choice of the perturbation norm, we focus on $\ell_\infty$-perturbation, i.e., $\Vert\Delta\Vert=\Vert\Delta\Vert_\infty$. 
Some literature has pointed out that $\ell_\infty$-perturbation could help recover the model sparsity \cite{ribeiro2024regularization,xie2024asymptotic}. 
Since the sparsity assumption could improve the model interpretation and reduce problem complexity \cite{hastie2015statistical},
especially in high-dimensional regimes,
$\ell_\infty$-perturbation will be studied, and certain sparsity patterns will be assumed in this paper.
In terms of the loss function,
we focus on the loss in the linear regression,
i.e., $L(\bm{x},y,\beta)=(\bm{x}^\top\beta-y)^2$. 
In particular, many of the existing theoretical explorations on adversarial training are based on linear models \cite{javanmard2022precise,javanmard2020precise,ribeiro2024regularization,xie2024asymptotic,xing2021adversarially,yin2019rademacher},
which admits advanced analytical analysis and sheds light on the characteristics of adversarial training in more general settings and applications.
In this regard, the linear regression is considered in this paper.
In short, we will focus on the adversarial-trained linear regression as shown in the following:
 \begin{equation}\label{mainproblem}\widehat{\beta}\in\underset{\beta}{\arg\min}\frac{1}{n}\sum_{i=1}^n \max_{\Vert\Delta\Vert_\infty\leq \delta} \left((\bm{X}_i+\Delta)^\top\beta-Y_i\right)^2,\end{equation}
 where $\bm{X}_i,\beta \in\mathbb{R}^p$ and $Y_i\in\mathbb{R}$. For the convenience of analysis, we write the given $n$ samples $(\bm{X}_1, Y_1),...,(\bm{X}_n,Y_n)$ in the matrix form: $\bm{X}=(\bm{X}_1,..., \bm{X}_n)^\top\in\mathbb{R}^{n\times p}$ and $\bm{Y}=(Y_1,..., Y_n)^\top\in\mathbb{R}^{n\times 1}$, where we call $\bm{X}$ the design matrix.

This paper delivers the convergence analysis under the high-dimension setting, 
where we suppose the dimension of the model parameter $\beta$ is larger than the sample size, i.e., $p>n$. 
Further, the parameter sparsity is assumed.
Specifically, we suppose that only a subset of the elements of the $p$-dimensional ground-truth parameter $\beta_\ast$ is nonzero.
If the size of the nonzero subset is $s$, it will be shown that the resulting prediction error of problem \eqref{mainproblem}, i.e., $\Vert \bm{X}(\widehat{\beta}-\beta_\ast)\Vert_2^2/n$, is of the order $s\log p/n$ under the restricted eigenvalue condition.
The restricted eigenvalue condition is a standard assumption in the literature of sparse high-dimensional linear regression \cite{bellec2018slope,belloni2011square,bickel2009simultaneous,lounici2011oracle}.
Notably, the rate $s\log p/n$ is optimal in the minimax sense, up to a logarithmic factor, for all estimators over a class of $s$-sparse $p$-dimensional vectors if there are $n$ training samples \cite{raskutti2011minimax}.

Our aforementioned results have the following implications.
Firstly, in addition to robustness towards perturbation, our results show that adversarial training is beneficial regarding statistical optimality. This means the resulting estimator can achieve the minimax convergence rate for prediction error.
To the best of our knowledge, we are the first to prove the minimax optimality of the adversarial training estimator.
Secondly, our analysis illustrates that the $\ell_\infty$-perturbation is recommended if the sparsity condition is known and a fast theoretical error convergence rate is required.

The convergence rate of the group adversarial training is also investigated.
In the literature, the group effect has been studied in (finite-type) Wasserstein distributionally robust optimization problems \cite{blanchet2017distributionally,chen2022robust}.
Since adversarial training is equivalent to $\infty$-type Wasserstein distributionally robust optimization problem \cite{gao2023distributionally}, the formulation of group Wasserstein distributionally robust optimization problem discussed in \cite{blanchet2017distributionally,chen2022robust} can be generalized to the adversarial training problem.
We give a formal formulation of the group adversarial training problem based on frameworks in \cite{blanchet2017distributionally,chen2022robust}.
Further, we derive the non-asymptotic convergence rate of the prediction error for the group adversarial training problem.
It will be shown that group adversarial training can achieve a faster convergence upper bound if certain group sparsity structures are satisfied. 
The details can be found in Section \ref{congroupat}.

\subsection{Related Work}
We review and compare some related work in this subsection.

The asymptotic behavior of $\ell_\infty$-adversarial training estimator in the generalized linear model has been discussed in \cite{xie2024asymptotic}.
Notably, the paper \cite{xie2024asymptotic} studies the behavior of adversarial training estimator from an asymptotic point of view, while our paper delivers a non-asymptotic analysis. More specifically, analysis in \cite{xie2024asymptotic} is based on the asymptotic distribution of the adversarial training estimator, while our work is to give a non-asymptotic upper bound of the prediction error of the adversarial training estimator. More discussions can be found in Remark \ref{comparewithxie}.

The prediction error of $\ell_\infty$-adversarial training estimator has been briefly analyzed in \cite{ribeiro2024regularization}, where the proven convergence is of the order $1/\sqrt{n}$. The results in our paper are different in the following two perspectives. 
Firstly, a faster convergence rate of the order $1/n$ is given, and the associated rate is minimax optimal up to a logarithmic factor. 
Secondly,  we have incorporated the sparsity setting in the model analysis, while no sparsity pattern is considered in theoretical analysis for $\ell_\infty$-adversarial training in \cite{ribeiro2024regularization}.  More discussions can be found in Remark \ref{compareribeiro}.

The paper \cite{xing2021adversarially} also investigates the convergence of adversarial training estimator. The derivations in \cite{xing2021adversarially} are based on the assumption that the input variable $\bm{X}$ follows $p$-dimensional Gaussian distribution while our analysis imposes the restricted eigenvalue condition, which is widely adopted in the sparse high-dimensional linear regression analysis. In addition, notice that \cite{xing2021adversarially} argues the superiority of incorporating the sparsity information by deriving lower bounds for the estimator error while we directly prove the rate optimality under the sparsity assumption.
Also, \cite{xing2021adversarially} applies  $\ell_2$-perturbation while our work focuses on $\ell_\infty$-perturbation. 

In the literature, it has been proven that multiple estimators, including LASSO, Dantzig selector, and square-root LASSO, can achieve the minimax rate (up to a logarithmic factor) in high-dimensional sparse linear regression \cite{belloni2011square,bickel2009simultaneous,hastie2015statistical,raskutti2011minimax}.
However, to the best of our knowledge, no literature has investigated this property for the widely-used adversarial training model.
We are the first to study whether the adversarial training estimator can be minimax optimal, and our theoretical analysis implies that the answer is yes, i.e., the adversarial training estimator under $\ell_\infty$-perturbation enjoys rate optimality.
In addition, the group lasso has been intensely studied to explore the parameter group structure \cite{lounici2011oracle,yuan2006model} while group adversarial training imposes group-structure on the perturbation.
It will be shown that the group adversarial training estimator shares a similar convergence rate with the group LASSO estimator.
Our proof technique is developed upon and extends the technical methods in the aforementioned papers \cite{belloni2011square,bickel2009simultaneous,hastie2015statistical,lounici2011oracle}.

\subsection{Notations and Preliminaries}
We introduce some notations which will be used in the rest of the paper. For vector $\bm{z}\in\mathbb{R}^p$, we use $\Vert\bm{z}\Vert_q$ to denote the $\ell_q$ norm of the vector $\bm{z}$, i.e., $\Vert\bm{z}\Vert_q^q=\sum_{j=1}^p \vert\bm{z}_j \vert^q, 1\leq q<\infty, \Vert\bm{z}\Vert_\infty=\max_{1\leq j\leq p} \vert\bm{z}_j\vert.$
We use $e_j\in\mathbb{R}^p, 1\leq j \leq p,$ to denote the basis vectors where the $j$th component is 1 and 0 otherwise.
$I_n\in\mathbb{R}^{n\times n}$ denotes the identity matrix.
For some set $S$, we use $S^c$ to denote the complement set of $S$ and $\vert S\vert$ to denote the cardinality of $S$.
If the set $S$ is the subset of $\{1,...,p\}$, we use $\bm{z}_{S}\in\mathbb{R}^{\vert S\vert}$ to denote the subvector indexed by elements of $S$.

We clarify some preliminary settings which will be used in this paper.
Throughout this paper, we suppose the high-dimension setting holds,
the samples are generated from the Gaussian linear model,
and the design matrix is normalized. 
We conclude these conditions in the following assumptions:
\begin{assumption}[High-dimension]
    The parameter dimension $p$ is larger than the sample size $n$, i.e., we have $n<p$.
\end{assumption}
\begin{assumption}[Gaussian linear model]\label{assume2} The design matrix $\bm{X}$ is fixed and the response vector $\bm{Y}$ is generated by the following: $\bm{Y}=\bm{X}\beta_\ast+\bm{\epsilon}$, where $\bm{\epsilon}$ has i.i.d. entries $\mathcal{N}(0,\sigma^2)$.
\end{assumption}
\begin{assumption}[Normalization]\label{assumpdesignmatrix}
The design matrix $\bm{X}$ is normalized such that $\Vert \bm{X} e_j\Vert_2\leq \sqrt{n},$ for $1\leq j\leq p$.
\end{assumption}

\subsection{Organization of this Paper}
The remainder of this paper is organized as follows. 
In Section \ref{conat},  we derive the convergence rate of the adversarial training estimator in high-dimensional linear regression.
In Section \ref{groupatfor},  we derive the convergence rate of group adversarial training and compare it with the existing adversarial training.
Numerical experiments are conducted in Section \ref{numericalexperiments}.
Possible future work is discussed in Section \ref{discussion}.
The proofs are relegated to the appendix whenever possible.
\section{$\ell_\infty$-Adversarial-Trained Linear Regression}\label{conat}
In this section, we will first introduce the problem formulation of the adversarial training in linear regression under $\ell_\infty$-perturbation and then deliver the convergence analysis of the prediction error under $\ell_\infty$-perturbation in the high-dimensional setting.

\subsection{Problem Formulation}\label{atproblemformulation}
In this subsection, we give the problem formulation of $\ell_\infty$-adversarial-trained linear regression and discuss its dual.

Recall that the $\ell_\infty$-adversarial training problem in linear regression has the formulation shown in \eqref{mainproblem}.
The solution $\widehat{\beta}$ to the optimization problem  \eqref{mainproblem} is used to estimate the ground-truth data generating parameter $\beta_\ast$, seeing Assumption \ref{assume2}.
In the inner optimization problem, we compute the worst-case square loss between the response variable and the linear prediction among the perturbations.
The perturbations are added to the input variable and with the largest $\ell_\infty$-norm $\delta$. 
In the outer optimization problem, we optimize the empirical expectation of the worst-case loss of given samples.

The optimization problem \eqref{mainproblem} can be further simplified by considering its dual formulation, which is shown as follows.
 \begin{proposition}[Dual Formulation of problem \eqref{mainproblem}, Proposition 1 in \cite{ribeiro2024regularization}]\label{proposition}
 If we denote the optimal value of problem \eqref{mainproblem} by $R(\delta)$, then we have that 
     \begin{equation}\label{mainprob} R(\delta)=\min_{\beta}\frac{1}{n}\sum_{i=1}^n\left(\vert  \bm{X}_i^\top\beta-Y_i\vert +\delta\Vert\beta\Vert_{1}\right)^2.\end{equation}
 \end{proposition}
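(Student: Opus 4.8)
The plan is to prove the identity pointwise in $\beta$ by evaluating the inner maximization in closed form, after which the outer minimization transfers directly. Fix an arbitrary $\beta$ and a sample index $i$, and abbreviate the residual as $r_i = \bm{X}_i^\top\beta - Y_i$. Expanding the perturbed prediction gives $(\bm{X}_i+\Delta)^\top\beta - Y_i = r_i + \Delta^\top\beta$, so the inner problem becomes $\max_{\Vert\Delta\Vert_\infty\leq\delta}(r_i+\Delta^\top\beta)^2$, which depends on $\Delta$ only through the scalar linear form $\Delta^\top\beta$. Observe that the maximization in \eqref{mainproblem} sits inside the sum, so each term may be optimized over its own perturbation independently.

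First I would determine the range of $\Delta^\top\beta$ as $\Delta$ varies over the $\ell_\infty$-ball of radius $\delta$. By the duality between $\ell_\infty$ and $\ell_1$ (equivalently, by the coordinatewise choice $\Delta_j = \delta\,\sign(\beta_j)$), the maximum of $\Delta^\top\beta$ equals $\delta\Vert\beta\Vert_1$, and by symmetry its minimum equals $-\delta\Vert\beta\Vert_1$. Hence $\Delta^\top\beta$ attains every value in the interval $[-\delta\Vert\beta\Vert_1,\ \delta\Vert\beta\Vert_1]$.

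Next I would maximize the square. Since $t\mapsto t^2$ is increasing in $|t|$, it suffices to maximize $|r_i + \Delta^\top\beta|$, i.e., to maximize $|r_i + u|$ over $u\in[-\delta\Vert\beta\Vert_1,\ \delta\Vert\beta\Vert_1]$. The maximum of an absolute value over a symmetric interval is attained at an endpoint and equals $|r_i| + \delta\Vert\beta\Vert_1$ (take $u = \delta\Vert\beta\Vert_1$ when $r_i\geq 0$ and $u = -\delta\Vert\beta\Vert_1$ when $r_i < 0$). Therefore the inner maximum equals $(|r_i| + \delta\Vert\beta\Vert_1)^2 = (|\bm{X}_i^\top\beta - Y_i| + \delta\Vert\beta\Vert_1)^2$. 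Summing over $i$, dividing by $n$, and minimizing over $\beta$ yields the claimed expression for $R(\delta)$.

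As for the main obstacle, there is no genuinely hard step, since everything reduces to a one-dimensional convex-geometry computation; the only point requiring care is confirming that a single feasible $\Delta$ simultaneously achieves the worst sign and the maximal magnitude, which the explicit choice $\Delta_j = \delta\,\sign(r_i)\,\sign(\beta_j)$ settles. I would also note that no separate argument is needed for the outer problem: because the inner maximum has been computed for every fixed $\beta$, the equality of the two optimal values is immediate.
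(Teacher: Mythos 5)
Your proof is correct. The paper does not actually prove this proposition internally --- it imports it as Proposition 1 of \cite{ribeiro2024regularization} --- and your argument is precisely the standard one behind that result: since each term's maximization depends on $\Delta$ only through the scalar $\Delta^\top\beta$, which by $\ell_\infty$/$\ell_1$ duality ranges over $[-\delta\Vert\beta\Vert_1,\,\delta\Vert\beta\Vert_1]$, the sign-aligned choice $\Delta_j=\delta\,\sign(r_i)\,\sign(\beta_j)$ attains the inner maximum $\left(\vert \bm{X}_i^\top\beta-Y_i\vert+\delta\Vert\beta\Vert_1\right)^2$, and pointwise equality of the two objectives in $\beta$ immediately gives equality of the optimal values.
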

We discuss the advantages and theoretical insights we could get by considering the dual problem \eqref{mainprob}.
Note that the dual formulation \eqref{mainprob} removes the inner maximization of problem \eqref{mainproblem}, and the associated objective function is a convex function of $\beta$.
Thus, it will be more convenient to solve the dual problem \eqref{mainprob} than the primal problem  \eqref{mainproblem}.
Also, the expansion of the objective function in \eqref{mainprob} yields the following:
\begin{equation}\label{regularization}\min_{\beta}\frac{1}{n}\sum_{i=1}^n(\bm{X}_i^\top\beta-Y_i)^2 +\delta\Vert\beta\Vert_{1}\left(\frac{2}{n}\sum_{i=1}^n \vert\bm{X}_i^\top\beta-Y_i\vert\right)+ \delta^2 \Vert\beta\Vert_1^2,\end{equation}
where the residual term $\delta^2 \Vert\beta\Vert_1^2$ will be of high order if we let $\delta$, for example, be proportional to the inverse of a positive power of $n$.
Regardless of the high order residual term, the objective function in problem \eqref{mainprob} can be viewed as the sum of the loss function in linear regression and a regularization term depending on $\Vert\beta\Vert_1$. 
This implies that $\ell_\infty$-adversarial-trained linear regression has a regularization effect.
We refer to \cite{ribeiro2024regularization,xie2024asymptotic} and references therein for more discussions about the regularization effect of adversarial training.
Since the well-known LASSO is formulated by imposing the $\ell_1$-norm regularization term and enjoys the minimax convergence rate of the prediction error \cite{raskutti2011minimax},
the dual formulation \eqref{mainprob} of $\ell_\infty$-adversarial training in linear regression and its reformulation \eqref{regularization} may indicate a fast convergence of its prediction error for the adversarial training estimator.

\subsection{Convergence Analysis}\label{subsectionconat}
In this subsection, we will first introduce the restricted eigenvalue condition and then derive the convergence rate of the prediction error for the adversarial training estimator in high-dimensional linear regression under the restricted eigenvalue condition and $\ell_\infty$-perturbation.  We will also discuss the high-probability arguments upon which we prove the optimality of the associated adversarial training estimator.

Before we deliver the convergence analysis, we make the following assumption.
\begin{assumption}[Restricted Eigenvalue Condition]\label{recondition}
    The matrix $\bm{X}\in\mathbb{R}^{n\times p}$ satisfies the restricted eigenvalue condition if there exists a positive number $\gamma=\gamma(s,c_1)>0$ such that
\[\min\left\{\frac{\Vert \bm{X}v\Vert_2}{\sqrt{n}\Vert v\Vert_2}: \vert S\vert\leq s, v\in\mathbb{R}^p\backslash \{\bm{0}\}, \Vert v_{S^c}\Vert_1 \leq c_1\Vert v_{S}\Vert_1\right\}\geq \gamma,\]
where $S$ is some subset of $\{1,...,p\}$.
\end{assumption}
In the sequel, we use the notation $\RE(s,c_1)$ to denote the restricted eigenvalue condition w.r.t. the cardinality $s$ of the index set $S$ and the constant $c_1$ in the constrained cone, i.e., $\Vert v_{S^c}\Vert_1 \leq c_1\Vert v_{S}\Vert_1$.
The restricted eigenvalue condition can be considered as a relaxation of the positive semidefiniteness of the gram matrix $\bm{X}^\top\bm{X}$ and is a useful technique in theoretical analysis in the sparse high-dimensional analysis \cite{hastie2015statistical}.

Equipped with Assumption \ref{recondition}, we have the following convergence result of prediction error.
\begin{theorem}[Prediction Error Analysis for Adversarial Training]\label{mainresult}
    Suppose the adversarial training problem \eqref{mainproblem} satisfies  \begin{equation}\label{deltarelation}\frac{2\Vert\bm{X}^\top \bm{\epsilon}\Vert_\infty}{\Vert\bm{\epsilon}\Vert_1}\leq \delta.\end{equation}
    If $\beta_\ast$ is supported on a subset $S$ of $\{1,...,p\}$ where $\vert S\vert\leq s$, and the design matrix $\bm{X}$ satisfies $\RE(s,3)$ with parameter $\gamma(s,3)$,  then we have that
\[\frac{1}{2n}\Vert \bm{X}(\widehat{\beta}-\beta_\ast)\Vert_2^2\leq 3\delta^2 s \max\left\{   \frac{9}{\gamma^2(s,3)}\left(\frac{\Vert\bm{\epsilon}\Vert_1}{n}\right)^2, 164\Vert\beta_\ast\Vert_2^2\right\} \]
\end{theorem}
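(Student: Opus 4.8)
The plan is to run the standard restricted-eigenvalue argument for $\ell_1$-penalized least squares, adapted to the nonlinear penalty produced by the dual objective \eqref{mainprob}. First I would invoke Proposition \ref{proposition} to replace \eqref{mainproblem} by its dual $f(\beta)=\frac1n\sum_i(|\bm{X}_i^\top\beta-Y_i|+\delta\|\beta\|_1)^2$ and exploit the optimality $f(\widehat\beta)\le f(\beta_\ast)$. Writing $v=\widehat\beta-\beta_\ast$ and using $\bm{Y}=\bm{X}\beta_\ast+\bm\epsilon$, so that the residual at $\beta_\ast$ equals $-\bm\epsilon$, I would expand each square via $(a+b)^2=a^2+2ab+b^2$ and cancel the common $\frac1n\|\bm\epsilon\|_2^2$ term. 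This produces a basic inequality of the shape
\[\frac1n\|\bm{X} v\|_2^2 + \frac{2\delta\|\widehat\beta\|_1}{n}\|\bm{X} v-\bm\epsilon\|_1 + \delta^2\|\widehat\beta\|_1^2 \le \frac2n\langle \bm{X} v,\bm\epsilon\rangle + \frac{2\delta\|\beta_\ast\|_1}{n}\|\bm\epsilon\|_1 + \delta^2\|\beta_\ast\|_1^2,\]
where the last two right-hand terms are signal-dependent and the cross and quadratic terms in $\|\widehat\beta\|_1$ play the role of the regularizer.

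Next I would control the stochastic term. By H\"older, $\frac2n\langle \bm{X} v,\bm\epsilon\rangle\le\frac2n\|\bm{X}^\top\bm\epsilon\|_\infty\|v\|_1$, and the hypothesis \eqref{deltarelation} converts this into $\lambda\|v\|_1$ with $\lambda:=\delta\|\bm\epsilon\|_1/n$. With this single deterministic bound the problem becomes purely geometric: the penalty terms must be turned into a cone constraint on $v$ so that Assumption \ref{recondition} applies. Splitting $v=v_S+v_{S^c}$ and using that $\beta_\ast$ is supported on $S$ gives $\|\widehat\beta\|_1\ge\|\beta_\ast\|_1-\|v_S\|_1+\|v_{S^c}\|_1$; feeding this into the linear part of the penalty and bounding the quadratic part by convexity of $t\mapsto t^2$, so that $\delta^2(\|\widehat\beta\|_1^2-\|\beta_\ast\|_1^2)\ge 2\delta^2\|\beta_\ast\|_1(\|v_{S^c}\|_1-\|v_S\|_1)$, I would aim to produce the cone $\|v_{S^c}\|_1\le 3\|v_S\|_1$ that matches $\RE(s,3)$.

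The main obstacle is exactly here: unlike LASSO, the dual objective carries an extra signal-dependent mass $2\lambda\|\beta_\ast\|_1+\delta^2\|\beta_\ast\|_1^2$ on the right-hand side that does not cancel, so the cone condition cannot be obtained unconditionally. I expect the resolution to be a case split comparing the noise proxy $\|\bm\epsilon\|_1/n$ with the signal $\delta\|\beta_\ast\|_1$ (equivalently comparing $\lambda$ with $\delta^2\|\beta_\ast\|_1$). In the noise-dominated case the residual terms are absorbed, the cone $\|v_{S^c}\|_1\le 3\|v_S\|_1$ holds, and I would apply the restricted eigenvalue bound $\|v_S\|_1\le\sqrt s\,\|v\|_2\le(\sqrt s/\gamma)\,\|\bm{X} v\|_2/\sqrt n$ to the basic inequality; writing $u=\|\bm{X} v\|_2/\sqrt n$, this gives $u^2\le C_1(\sqrt s/\gamma)\lambda\,u$ for an absolute constant $C_1$, hence $u^2\le C_2\, s\lambda^2/\gamma^2=C_2\, s\delta^2(\|\bm\epsilon\|_1/n)^2/\gamma^2$, which is the first term in the maximum.

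In the complementary signal-dominated case the cone may fail, but then $\delta^2\|\beta_\ast\|_1^2$ dominates the right-hand side and, using $\|\beta_\ast\|_1\le\sqrt s\,\|\beta_\ast\|_2$ from $|S|\le s$, the prediction error is controlled directly by $\delta^2 s\|\beta_\ast\|_2^2$, the second term. Taking the larger of the two regimes, and tracking the numerical constants through the quadratic-in-$u$ step and the convexity bound, is what I expect to yield the stated $\max\{9\gamma^{-2}(s,3)(\|\bm\epsilon\|_1/n)^2,\ 164\|\beta_\ast\|_2^2\}$ together with the overall prefactor $3\delta^2 s$. The delicate bookkeeping, rather than any single inequality, is the part I anticipate being genuinely difficult, precisely because the $\delta^2\|\beta\|_1^2$ residual term destroys the exact cancellations that make the classical LASSO derivation clean.
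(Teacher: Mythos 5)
Your proposal reproduces the paper's outer skeleton (pass to the dual via Proposition \ref{proposition}, use the basic inequality $f(\widehat\beta)\le f(\beta_\ast)$, bound the stochastic term by H\"older plus \eqref{deltarelation}, derive a cone condition so that $\RE(s,3)$ applies, split into two cases, and solve a quadratic inequality in $\Vert\bm{X}v\Vert_2/\sqrt{n}$), but it has a genuine gap: you never control $\Vert\widehat\beta\Vert_1$ itself, and the argument cannot close without that. After the substitution $\Vert\bm{X}v-\bm{\epsilon}\Vert_1\ge\Vert\bm{\epsilon}\Vert_1-\Vert\bm{X}v\Vert_1$, the basic inequality carries the estimator-dependent term $\frac{2\delta}{n}\Vert\widehat\beta\Vert_1\Vert\bm{X}v\Vert_1\le\frac{2\delta}{\sqrt{n}}\Vert\widehat\beta\Vert_1\Vert\bm{X}v\Vert_2$ on the right-hand side, and the only negative quantity available to absorb it is $-\delta^2\Vert\widehat\beta\Vert_1^2$, with coefficient exactly one. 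Any Young-type split of $\frac{2\delta}{\sqrt{n}}\Vert\widehat\beta\Vert_1\Vert\bm{X}v\Vert_2$ that keeps a positive fraction of $\Vert\bm{X}v\Vert_2^2$ on the left necessarily leaves a strictly positive multiple of $\delta^2\Vert\widehat\beta\Vert_1^2$ uncancelled, which is unbounded a priori; alternatively, if you discard that cross term (it is nonnegative on the left) and try to neutralize $\lambda\Vert v\Vert_1\le\lambda\Vert\widehat\beta\Vert_1+\lambda\Vert\beta_\ast\Vert_1$ by completing the square against $-\delta^2\Vert\widehat\beta\Vert_1^2$, the best available bound is $\lambda\Vert\widehat\beta\Vert_1-\delta^2\Vert\widehat\beta\Vert_1^2\le\lambda^2/(4\delta^2)=\frac14(\Vert\bm{\epsilon}\Vert_1/n)^2\asymp\sigma^2$, a non-vanishing constant that is not multiplied by $\delta^2 s$, so the stated rate is lost. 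This is precisely why the paper proves Lemma \ref{lemmaupperbound} ($\Vert\widehat\beta\Vert_1\le 9\Vert\beta_\ast\Vert_1$) \emph{before} the main argument, and proves it from a different source of information: the first-order optimality (KKT) condition of the dual problem, dotted with $\widehat\beta-\beta_\ast$, followed by a discriminant argument on the resulting quadratic in $\Vert\bm{X}v\Vert_2/\sqrt{n}$. That lemma is then invoked in \emph{both} cases of the main proof; it is the ingredient your plan is missing, and it cannot be extracted from $f(\widehat\beta)\le f(\beta_\ast)$ alone.

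A second, related discrepancy is that your case split is on the wrong quantity. The paper splits on the sign of the estimator-dependent expression $\Vert\widehat\beta-\beta_\ast\Vert_1+2\Vert\beta_\ast\Vert_1-2\Vert\widehat\beta\Vert_1$. When it is nonpositive, all noise terms drop out and Lemma \ref{lemmaupperbound} alone yields the bound $162\,\delta^2 s\Vert\beta_\ast\Vert_2^2$; when it is nonnegative, it is dominated by $3\Vert v_S\Vert_1-\Vert v_{S^c}\Vert_1$, which \emph{is} the cone condition needed for $\RE(s,3)$. Your split compares the fixed scales $\delta\Vert\bm{\epsilon}\Vert_1/n$ and $\delta^2\Vert\beta_\ast\Vert_1$; noise domination in that sense does not imply the cone (the cone here comes from the sign hypothesis on $\Vert\widehat\beta\Vert_1$, not from the relative size of noise and signal), and in your signal-dominated case the bound ``controlled directly by $\delta^2 s\Vert\beta_\ast\Vert_2^2$'' is unjustified because the terms $\lambda\Vert v\Vert_1$ and $\frac{2\delta}{n}\Vert\widehat\beta\Vert_1\Vert\bm{X}v\Vert_1$ still involve the unbounded quantity $\Vert\widehat\beta\Vert_1$. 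Both real obstacles --- producing the cone and neutralizing $\Vert\widehat\beta\Vert_1$ --- are resolved in the paper by the sign-based case split together with Lemma \ref{lemmaupperbound}, neither of which appears in your proposal.
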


Theorem \ref{mainresult} shows that the upper bound of the prediction error mainly depends on the sparsity cardinality $s$ of the ground-truth parameter $\beta_\ast$ and perturbation magnitude $\delta$. 
The perturbation magnitude $\delta$ is assumed to be equal to or larger than $2\Vert\bm{X}^\top \bm{\epsilon}\Vert_\infty/\Vert\bm{\epsilon}\Vert_1$. 
We could apply the concentration inequalities to give a closed-form expression of the perturbation magnitude, based on which the convergence rate of the prediction error is derived.
The convergence rate holds with a high probability and can be found in the following corollary.

\begin{corollary}\label{highprob}
      Consider the adversarial training problem \eqref{mainproblem} with perturbation magnitude 
  \begin{equation}\label{deltadef}\delta=\frac{4}{\sqrt{\frac{2}{\pi}}-\frac{1}{10}}\sqrt{\frac{\log p}{n}}.\end{equation}
    If $\beta_\ast$ is supported on a subset $S$ of $\{1,...,p\}$ where $\vert S\vert\leq s$, and the design matrix $\bm{X}$ satisfies $\RE(s,3)$ with parameter $\gamma(s,3)$,  then we have that
        \begin{equation}\label{convergence1}\frac{1}{2n}\Vert \bm{X}(\widehat{\beta}-\beta_\ast)\Vert_2^2\leq  192\frac{s\log p}{n} \max\left\{   \frac{9}{\gamma^2(s,3)}\left(\frac{\Vert\bm{\epsilon}\Vert_1}{n}\right)^2,164\Vert\beta_\ast\Vert_2^2\right\}\end{equation}
    holds with a probability greater than $1-2\exp\left( -C_1n\right)-2/p$, where $C_1$ is a positive constant.
\end{corollary}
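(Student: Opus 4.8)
The plan is to use Theorem \ref{mainresult} as a black box, so the entire task reduces to showing that the deterministic trigger condition \eqref{deltarelation} is met, with high probability, by the specific choice of $\delta$ in \eqref{deltadef}. Concretely, I would exhibit a high-probability event on which the random ratio $2\Vert\bm{X}^\top\bm{\epsilon}\Vert_\infty/\Vert\bm{\epsilon}\Vert_1$ does not exceed $\delta$; once this holds, the bound \eqref{convergence1} is obtained by substituting $\delta^2$ into the conclusion of Theorem \ref{mainresult}. Note that the ratio is homogeneous of degree zero in $\sigma$, since numerator and denominator both scale linearly in $\sigma$, which is precisely why the threshold in \eqref{deltadef} carries no dependence on the noise level.

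The first ingredient is an upper bound on the numerator $\Vert\bm{X}^\top\bm{\epsilon}\Vert_\infty=\max_{1\le j\le p}|(\bm{X}e_j)^\top\bm{\epsilon}|$. Under Assumption \ref{assume2} each coordinate $(\bm{X}e_j)^\top\bm{\epsilon}$ is centered Gaussian with variance $\sigma^2\Vert\bm{X}e_j\Vert_2^2\le\sigma^2 n$, where the inequality is the normalization in Assumption \ref{assumpdesignmatrix}. A Gaussian tail bound together with a union bound over the $p$ coordinates then gives
\[P\left(\Vert\bm{X}^\top\bm{\epsilon}\Vert_\infty\ge 2\sigma\sqrt{n\log p}\right)\le 2p\exp\left(-\frac{(2\sigma\sqrt{n\log p})^2}{2\sigma^2 n}\right)=\frac{2}{p},\]
which accounts for the $2/p$ term in the stated probability.

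The second ingredient is a matching lower bound on the denominator $\Vert\bm{\epsilon}\Vert_1=\sum_{i=1}^n|\epsilon_i|$. The summands $|\epsilon_i|$ are i.i.d. with mean $\mathbb{E}|\epsilon_i|=\sigma\sqrt{2/\pi}$, and each centered variable $|\epsilon_i|-\sigma\sqrt{2/\pi}$ is sub-Gaussian with parameter proportional to $\sigma$. A one-sided concentration inequality for sums of independent sub-Gaussian variables then yields, for the fixed deviation $\sigma/10$,
\[P\left(\Vert\bm{\epsilon}\Vert_1\le n\sigma\left(\sqrt{\tfrac{2}{\pi}}-\tfrac{1}{10}\right)\right)\le 2\exp(-C_1 n)\]
for an absolute constant $C_1>0$, accounting for the $2\exp(-C_1 n)$ term. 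Intersecting the two good events and dividing the bounds gives
\[\frac{2\Vert\bm{X}^\top\bm{\epsilon}\Vert_\infty}{\Vert\bm{\epsilon}\Vert_1}\le\frac{4\sigma\sqrt{n\log p}}{n\sigma(\sqrt{2/\pi}-1/10)}=\frac{4}{\sqrt{2/\pi}-1/10}\sqrt{\frac{\log p}{n}}=\delta,\]
so \eqref{deltarelation} is satisfied. Feeding $\delta^2=16(\sqrt{2/\pi}-1/10)^{-2}\log p/n$ into Theorem \ref{mainresult} and bounding the numerical factor by $16(\sqrt{2/\pi}-1/10)^{-2}\le 64$ (equivalently $\sqrt{2/\pi}\ge 3/5$) turns $3\delta^2 s$ into at most $192\,s\log p/n$ while leaving the maximum unchanged, which is precisely \eqref{convergence1}; a final union bound over the two failure events yields the probability $1-2\exp(-C_1 n)-2/p$.

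I expect the main obstacle to be the lower-tail control of $\Vert\bm{\epsilon}\Vert_1$: one must verify that $|\epsilon_i|-\mathbb{E}|\epsilon_i|$ is genuinely sub-Gaussian and pin down its sub-Gaussian parameter precisely enough that the fixed deviation $\sigma/10$ produces a tail of the exponential form $\exp(-C_1 n)$, while simultaneously being the value that collapses the combined threshold into the tidy closed form of \eqref{deltadef}. The Gaussian maximal inequality for the numerator is, by contrast, entirely routine.
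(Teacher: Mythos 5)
Your proposal is correct and follows essentially the same route as the paper's proof: a Gaussian tail bound plus union bound for $\Vert\bm{X}^\top\bm{\epsilon}\Vert_\infty$ (giving the $2/p$ term), sub-Gaussian (Hoeffding-type) concentration of $\Vert\bm{\epsilon}\Vert_1$ around $n\sigma\sqrt{2/\pi}$ with deviation $\sigma/10$ (giving the $2\exp(-C_1 n)$ term), intersecting the two events so that \eqref{deltarelation} holds, and then invoking Theorem \ref{mainresult}. Your explicit verification that $3\delta^2 s \le 192\, s\log p/n$ via $\sqrt{2/\pi}\ge 3/5$ is a detail the paper leaves implicit, and your observation that the ratio is scale-invariant in $\sigma$ correctly explains why $\delta$ needs no knowledge of the noise level.
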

\begin{remark}\label{ratechoice}
    We discuss the choice of $\delta$.
    Corollary \ref{highprob} implies that the perturbation magnitude $\delta$ should be of the order $1/\sqrt{n}$ in order to derive the non-asymptotic convergence rate \eqref{convergence1}. 
    The associated order is consistent with the asymptotic analysis in \cite{xie2024asymptotic}, where 
 the sparsity-recovery ability could be proven in the asymptotic sense if the order is of the order $1/\sqrt{n}$.
\end{remark}
In Corollary \ref{highprob}, we choose $\delta$ as is shown in \eqref{deltadef}. Under this setting, it can be proven that the inequality \eqref{deltarelation} holds with a high probability.
Then, we adopt Theorem \ref{mainresult} and could have the expression of the prediction error in terms of $p$ and $n$ as shown in \eqref{convergence1}.

The convergence rate could be further simplified in the following corollary if both the error variance $\sigma^2$ and the $\ell_2$-norm of the ground-truth parameter $\beta_\ast$ are bounded.
\begin{corollary}\label{highprobsim}
Under the assumptions stated in Corollary \ref{highprob}, suppose there exists a finite positive constant $R$ such that  \[2\sqrt{41}\Vert \beta_\ast\Vert_2\leq R,\quad \sigma < \frac{1}{6}\gamma(s,3)R, \] then we have that
    \[\frac{1}{2n}\Vert \bm{X}(\widehat{\beta}-\beta_\ast)\Vert_2^2\leq  192\frac{s\log p}{n} R^2\]
    holds with a probability greater than $1-2/p-2\exp\left( -C_1n\right)-\exp(-n)$, where $C_1$ is a positive constant.
\end{corollary}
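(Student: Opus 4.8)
The plan is to deduce the statement directly from Corollary \ref{highprob} by showing that, under the two extra hypotheses, the maximum appearing in the right-hand side of \eqref{convergence1} is at most $R^2$ with high probability. I would treat the two arguments of the maximum separately, since one is deterministic and the other requires a concentration estimate.

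For the parameter term I would simply observe that $164 = 4\cdot 41$, so that $164\Vert\beta_\ast\Vert_2^2 = \bigl(2\sqrt{41}\,\Vert\beta_\ast\Vert_2\bigr)^2$; the first hypothesis $2\sqrt{41}\,\Vert\beta_\ast\Vert_2\leq R$ then gives $164\Vert\beta_\ast\Vert_2^2\leq R^2$ with no probabilistic cost. For the noise term, the target inequality $\frac{9}{\gamma^2(s,3)}\bigl(\Vert\bm\epsilon\Vert_1/n\bigr)^2\leq R^2$ is equivalent to $\Vert\bm\epsilon\Vert_1/n\leq \gamma(s,3)R/3$, so everything reduces to an upper tail bound on the rescaled $\ell_1$-norm of the noise. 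Since $\bm\epsilon$ has i.i.d. $\mathcal{N}(0,\sigma^2)$ entries, I would use that the map $\bm\epsilon\mapsto\Vert\bm\epsilon\Vert_1$ is $\sqrt{n}$-Lipschitz in the Euclidean norm with mean $n\sigma\sqrt{2/\pi}$; Gaussian concentration then yields $\Pr\bigl(\Vert\bm\epsilon\Vert_1/n\geq \sigma\sqrt{2/\pi}+u\bigr)\leq \exp\bigl(-nu^2/(2\sigma^2)\bigr)$. The second hypothesis $\sigma<\gamma(s,3)R/6$ ensures $\gamma(s,3)R/3>2\sigma>\sigma\sqrt{2/\pi}$, so choosing $u=\gamma(s,3)R/3-\sigma\sqrt{2/\pi}$ makes $u$ a definite positive multiple of $\sigma$, collapsing the failure probability of the noise event to the advertised exponentially small term.

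Finally, I would combine the two parts by a union bound over the randomness in $\bm\epsilon$. On the intersection of the event of Corollary \ref{highprob} (on which \eqref{deltarelation} holds and \eqref{convergence1} is valid, with probability at least $1-2/p-2\exp(-C_1 n)$) and the concentration event above, both arguments of the maximum are at most $R^2$, hence so is the maximum; substituting this into \eqref{convergence1} gives $\frac{1}{2n}\Vert \bm{X}(\widehat{\beta}-\beta_\ast)\Vert_2^2\leq 192\,\frac{s\log p}{n}\,R^2$ with total failure probability at most $2/p+2\exp(-C_1 n)+\exp(-n)$, as claimed.

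The delicate step is the concentration estimate for $\Vert\bm\epsilon\Vert_1/n$: I expect the main work to be tracking the numerical constants carefully so that the hypothesis $\sigma<\gamma(s,3)R/6$ forces the deviation $u$ to be large enough relative to $\sigma$ that the Gaussian tail is bounded by exactly $\exp(-n)$ rather than merely by $\exp(-cn)$ for some $c<1$. This is precisely where the specific constant $1/6$ (together with the factor $2\sqrt{41}$) is used, and I would verify that the chosen threshold $\gamma(s,3)R/3$ leaves enough margin above the mean $\sigma\sqrt{2/\pi}$ to secure the stated exponent.
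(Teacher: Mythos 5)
Your outline matches the paper's: reduce to showing that each argument of the maximum in \eqref{convergence1} is at most $R^2$, handle the deterministic term via $164\Vert\beta_\ast\Vert_2^2=\left(2\sqrt{41}\Vert\beta_\ast\Vert_2\right)^2\leq R^2$, handle the noise term via a tail bound for $\Vert\bm{\epsilon}\Vert_1/n$ at the threshold $\gamma(s,3)R/3>2\sigma$, and finish with a union bound. The gap is exactly at the step you yourself flagged as delicate, and it does not survive verification. Gaussian Lipschitz concentration around the mean gives $\Pr\left(\Vert\bm{\epsilon}\Vert_1/n\geq \sigma\sqrt{2/\pi}+u\right)\leq \exp\left(-nu^2/(2\sigma^2)\right)$, but the hypothesis $\sigma<\gamma(s,3)R/6$ only guarantees $u>\left(2-\sqrt{2/\pi}\right)\sigma\approx 1.20\,\sigma$ (the margin degenerates to this as $\sigma$ approaches $\gamma(s,3)R/6$). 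To make the tail at most $\exp(-n)$ you need $u\geq\sqrt{2}\,\sigma\approx 1.41\,\sigma$; since $\left(2-\sqrt{2/\pi}\right)^2/2\approx 0.72<1$, your bound only yields a failure probability of order $\exp(-0.72\,n)$, which is strictly larger than the advertised $\exp(-n)$. So your argument proves the corollary only with a degraded exponent, not as stated.

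The paper sidesteps this by measuring the deviation from zero rather than from the mean: a Chernoff bound using $\mathbb{E}[\exp(s\vert\bm{\epsilon}_i\vert)]\leq 2\exp(\sigma^2s^2/2)$ gives $\Pr\left(\Vert\bm{\epsilon}\Vert_1/n\geq t\right)\leq \exp\left(-n\left(t^2/(2\sigma^2)-\log 2\right)\right)$, and at $t=\sqrt{2\log 2+2}\,\sigma\leq 2\sigma$ the exponent is exactly $-n$ (at $t=2\sigma$ it is $-n(2-\log 2)\approx -1.31\,n$). The quadratic gain from counting the full deviation $2\sigma$ from zero outweighs the per-coordinate cost $\log 2$ of handling the absolute value, and this is precisely the margin your mean-centered bound lacks. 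Your proof can be repaired either by substituting this Chernoff estimate for the Lipschitz concentration step, or by weakening the stated probability to $1-2/p-2\exp(-C_1 n)-\exp(-cn)$ with $c=\left(2-\sqrt{2/\pi}\right)^2/2$.
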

\begin{remark} 
Corollary \ref{highprobsim} investigates the behavior of the adversarial training estimator under $\ell_\infty$-perturbation by computing the resulting prediction error while \cite{xie2024asymptotic} studies the behavior of the adversarial training estimator under $\ell_\infty$-perturbation by deriving the associated limiting distribution.
Both \cite{xie2024asymptotic} and our results consider the sparsity condition. \cite{xie2024asymptotic} proves that $\ell_\infty$-adversarial training can help recover sparsity asymptotically if the parameter sparsity is known while our paper, i.e., Corollary \ref{highprobsim},  provides a fast non-asymptotic convergence rate for prediction error under the sparsity condition.
\label{comparewithxie} 
\end{remark}
\begin{remark}\label{compareribeiro}
Corollary \ref{highprobsim} illustrates that the convergence rate of prediction error for $\ell_\infty$-adversarial training in linear regression is of the order $1/n$  while the prediction error shown in \cite{ribeiro2024regularization} has a lower order $1/\sqrt{n}$.
Our paper achieves a faster rate by incorporating the sparsity information and applying the restricted eigenvalue condition.
\end{remark}
Corollary \ref{highprobsim} implies that the prediction error of high-dimensional $\ell_\infty$-adversarial-trained estimator is of the order $s\log p/n$.
This order is optimal up to a logarithmic factor in the minimax sense for any estimators over a class of $s$-sparse vectors in $\mathbb{R}^p$ when $n$ samples are given \cite{bellec2018slope,raskutti2011minimax}.

\section{Group Adversarial Training}\label{groupatfor}
This section will elaborate on the formulation of group adversarial training and the associated convergence rate.
Also, we compare group adversarial training under $(2,\infty)$-perturbation with classic adversarial training under $\ell_\infty$-perturbation.

Since the adversarial training forces the perturbation with uniform magnitude to each component of the input variable, 
it may not perform well if the input variable has a group effect.
The group structure exists in many real-world problems.
For example, groups of genes act together in pathways in gene-expression arrays
 \cite{malenova2021exploring}, and financial data can be grouped by different sectors and industries to help market prediction \cite{chan2017lasso}.
Also, if an input variable is a multilevel factor and dummy variables are introduced, then these dummy variables act in a group \cite{yuan2006model}.  
Group adversarial training can tackle the group effect by adding group-structured perturbation. The detailed formulation can be seen in Section \ref{groupatformulation}.

\subsection{Problem Formulation}\label{groupatformulation}
In this subsection, we describe the formulation of the group adversarial training.

Suppose the input variable $\bm{x}$ can be divided into $L$ non-overlapped groups. Then, we have the definition of the group-structured weighted norm accordingly in the following proposition, where the associated dual norm is also stated.
\begin{proposition}[Proposition 5 in \cite{blanchet2017distributionally}, Theorem 2.2 in \cite{chen2022robust}]\label{weightednorm}
Consider a vector $\bm{x}=(\bm{x}^1,...,\bm{x}^L)$, where each $\bm{x}^l\in\mathbb{R}^{p_l}$, and $\sum_{l=1}^Lp_l=p$. Define the weighted $(r,s)$-norm of $\bm{x}$ with the weight vector $\bm{\omega}=(w^1,...,\omega^l)$ to be:
\[ \Vert \bm{x}_{\bm{\omega}}\Vert_{r,s}=\left( \sum_{l=1}^{L}\Vert \omega^l\bm{x}^l\Vert_r^s\right)^{1/s},  1\leq s <\infty,\]\[ \Vert \bm{x}_{\bm{\omega}}\Vert_{r,\infty}=\max_{1\leq l\leq L}\Vert \omega^l\bm{x}^l\Vert_r,s=\infty,\]
where $\omega_l>0, \forall l$ and $r\geq1$.
Then, the dual norm of $(r,s)$-norm with weight $\bm{\omega}$ is the $(q,t)$-norm with weight $\bm{\omega}^{-1}=(1/w_1,...,1/w_L)$, i.e.  $\Vert \bm{x}_{\bm{\omega}^{-1}}\Vert_{q,t}$, where $1/r+1/q=1$ and $1/s+1/t=1$.
\end{proposition}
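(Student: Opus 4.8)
The plan is to compute the dual norm directly from its definition, $\Vert\bm{y}\Vert_{*}=\sup\{\langle \bm{x},\bm{y}\rangle : \Vert\bm{x}_{\bm{\omega}}\Vert_{r,s}\leq 1\}$, and to show it equals $\Vert\bm{y}_{\bm{\omega}^{-1}}\Vert_{q,t}$. The natural tool is a two-level application of H\"older's inequality: one within each group (the $\ell_r$/$\ell_q$ duality) and one across the groups (the $\ell_s$/$\ell_t$ duality). First I would establish the upper bound $\langle \bm{x},\bm{y}\rangle \leq \Vert\bm{x}_{\bm{\omega}}\Vert_{r,s}\Vert\bm{y}_{\bm{\omega}^{-1}}\Vert_{q,t}$, which is a generalized H\"older inequality for the grouped weighted norm; then I would exhibit a vector $\bm{x}$ turning both inequalities into equalities, certifying that the supremum is attained and equals the claimed value.

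For the upper bound, write $\langle \bm{x},\bm{y}\rangle = \sum_{l=1}^{L}\langle \bm{x}^l,\bm{y}^l\rangle = \sum_{l=1}^{L}\langle \omega^l\bm{x}^l,(\omega^l)^{-1}\bm{y}^l\rangle$, and apply the ordinary H\"older inequality inside each group with conjugate exponents $1/r+1/q=1$ to get $\langle \omega^l\bm{x}^l,(\omega^l)^{-1}\bm{y}^l\rangle \leq a_l b_l$, where $a_l:=\Vert\omega^l\bm{x}^l\Vert_r$ and $b_l:=\Vert(\omega^l)^{-1}\bm{y}^l\Vert_q$. Summing over $l$ and applying H\"older once more across the index $l$ with conjugate exponents $1/s+1/t=1$ yields $\sum_l a_l b_l \leq (\sum_l a_l^s)^{1/s}(\sum_l b_l^t)^{1/t} = \Vert\bm{x}_{\bm{\omega}}\Vert_{r,s}\,\Vert\bm{y}_{\bm{\omega}^{-1}}\Vert_{q,t}$. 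Restricting to $\Vert\bm{x}_{\bm{\omega}}\Vert_{r,s}\leq 1$ then gives $\Vert\bm{y}\Vert_{*} \leq \Vert\bm{y}_{\bm{\omega}^{-1}}\Vert_{q,t}$.

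For tightness, I would reverse-engineer the equality conditions of the two H\"older steps. Within each group, the inner product $\langle \omega^l\bm{x}^l,(\omega^l)^{-1}\bm{y}^l\rangle$ saturates the inner H\"older step when $\omega^l\bm{x}^l$ is aligned with $(\omega^l)^{-1}\bm{y}^l$ in the $\ell_r$/$\ell_q$ sense, e.g. by taking the $j$th coordinate of $\omega^l\bm{x}^l$ proportional to $\sign(y^l_j)\,\vert(\omega^l)^{-1}y^l_j\vert^{q-1}$; this fixes the direction of each $\bm{x}^l$ but leaves its scale $a_l=\Vert\omega^l\bm{x}^l\Vert_r$ free. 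I would then choose these scales so that $a_l^s\propto b_l^t$, which saturates the outer H\"older step, and normalize so that $\Vert\bm{x}_{\bm{\omega}}\Vert_{r,s}=1$; a short computation using $1/s+1/t=1$ confirms that the resulting $\langle\bm{x},\bm{y}\rangle$ equals $\Vert\bm{y}_{\bm{\omega}^{-1}}\Vert_{q,t}$. The main obstacle I anticipate is the boundary regime $s\in\{1,\infty\}$ (and similarly $r\in\{1,\infty\}$), where H\"older's inequality degenerates into a max--sum pairing whose equality condition changes form: for $s=\infty$ (so $t=1$) one must make all the $a_l$ equal, whereas for $s=1$ (so $t=\infty$) one must concentrate $\bm{x}$ on a single group achieving $\max_l b_l$. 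Groups with $\bm{y}^l=\bm{0}$ are handled by setting $\bm{x}^l=\bm{0}$. Verifying the extremal construction across these boundary cases is the only delicate part; the interior case is a routine double application of H\"older's inequality.
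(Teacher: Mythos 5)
The paper never proves this proposition: it is imported verbatim from the cited references (Proposition 5 of \cite{blanchet2017distributionally}, Theorem 2.2 of \cite{chen2022robust}) and used as a black box, so there is no in-paper argument to compare yours against. Your blind proof is the standard one and it is correct: the upper bound $\langle \bm{x},\bm{y}\rangle \leq \Vert\bm{x}_{\bm{\omega}}\Vert_{r,s}\Vert\bm{y}_{\bm{\omega}^{-1}}\Vert_{q,t}$ follows from writing $\langle \bm{x}^l,\bm{y}^l\rangle=\langle \omega^l\bm{x}^l,(\omega^l)^{-1}\bm{y}^l\rangle$ and applying H\"older twice (inside each group with exponents $r,q$, then across groups with exponents $s,t$), and your extremal construction is the right one --- in the interior case, the within-group choice $(\omega^l\bm{x}^l)_j\propto\sign(y^l_j)\vert((\omega^l)^{-1}y^l)_j\vert^{q-1}$ gives $a_l b_l$ exactly (using $r(q-1)=q$), and the scaling $a_l\propto b_l^{t-1}$ normalized to $\sum_l a_l^s=1$ gives $\sum_l a_l b_l=(\sum_l b_l^t)^{1/t}$ via $s(t-1)=t$. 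Your attention to the boundary exponents is well placed rather than a side issue: the case the paper actually needs is $r=2$, $s=\infty$ (so $q=2$, $t=1$), where the dual of the $(2,\infty)$-perturbation norm with weights $\bm{\omega}$ is the $(2,1)$-norm with weights $\bm{\omega}^{-1}$ appearing in Proposition \ref{dualformulationofgroupat}, and there the outer equality condition is indeed ``set $a_l=1$ for every group with $\bm{y}^l\neq\bm{0}$'' rather than the proportionality rule. One small point worth making explicit if you write this up: for $\bm{y}=\bm{0}$ the claim is trivial, and for groups with $\bm{y}^l=\bm{0}$ your convention $\bm{x}^l=\bm{0}$ keeps the constraint $\Vert\bm{x}_{\bm{\omega}}\Vert_{r,s}\leq 1$ intact without affecting the inner product, so the supremum is genuinely attained in all cases.
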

To handle the group structure in the input variable, the weighted $(r,s)$-norm is applied to add group structure in the perturbation accordingly, 
and the group adversarial training is formulated as follows,
\begin{equation*}\min_{\beta}\frac{1}{n}\sum_{i=1}^n \sup_{\left\Vert\Delta_{\bm{\omega}}\right\Vert_{r,s}\leq \delta} \left((\bm{X}_i+\Delta)^\top\beta-Y_i\right)^2,\end{equation*}
where $\bm{\omega}=(w^1,...,\omega^l)$.

Recall we focus on adversarial training problems under $\ell_\infty$-perturbation, high-dimension setting, and sparsity condition. Under this consideration, we let $s=\infty$ and $r=2$,
and then the associated group adversarial training problem under $(2,\infty)$-perturbation has the following expression:
\begin{equation}\label{groupinf}\min_{\beta}\frac{1}{n}\sum_{i=1}^n \sup_{\left\Vert\Delta_{\bm{\omega}}\right\Vert_{2,\infty}\leq \delta} \left((\bm{X}_i+\Delta)^\top\beta-Y_i\right)^2.\end{equation}

To facilitate convenience for the computation and analysis, similar to our study towards classic adversarial training in Section \ref{atproblemformulation}, we derive the dual formulation of problem \eqref{groupinf} in the following proposition.
One can check that the corresponding objective in the dual formulation \eqref{groupat} is also convex.
\begin{proposition}[Dual Formulation of problem \eqref{groupinf}]\label{dualformulationofgroupat}
 If we denote the optimal value of problem \eqref{groupinf} by $\widetilde{R}(\delta)$, then we have that 
    \begin{equation}\label{groupat}\widetilde{R}(\delta)=\min_{\beta}\frac{1}{n}\sum_{i=1}^n\left(\vert(\bm{X}_i+\Delta)^\top\beta-Y_i\vert +\delta\Vert\beta_{\bm{\omega}^{-1}}\Vert_{2,1}\right)^2,\end{equation}
    where \[\Vert\beta_{\bm{\omega}^{-1}}\Vert_{2,1}=\sum_{l=1}^{L} \frac{1}{\omega^l} \Vert \beta^l\Vert_2.\]\end{proposition}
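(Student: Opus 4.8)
The plan is to reproduce, for the group-structured perturbation, the same reduction that yields the classic dual formulation in Proposition \ref{proposition}: the perturbation $\Delta$ enters the residual only through the linear term $\Delta^\top\beta$, so the inner supremum collapses to the fixed residual plus a regularizer equal to $\delta$ times the dual of the perturbation norm, and that dual norm is supplied directly by Proposition \ref{weightednorm}. Thus the whole argument is a sign-and-magnitude computation followed by one application of the weighted-dual-norm identity.

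First I would fix $\beta$ and an index $i$ and write $(\bm{X}_i+\Delta)^\top\beta-Y_i=(\bm{X}_i^\top\beta-Y_i)+\Delta^\top\beta$. Since $t\mapsto t^2$ is increasing in $|t|$, maximizing the squared residual over the feasible set $\{\Delta:\Vert\Delta_{\bm{\omega}}\Vert_{2,\infty}\leq\delta\}$ is equivalent to maximizing $|(\bm{X}_i^\top\beta-Y_i)+\Delta^\top\beta|$ over that same set, which reduces the problem to a linear maximization in $\Delta$. I would then exploit that the feasible set is symmetric: if $\Delta$ is feasible then so is $-\Delta$, because $\Vert(-\Delta)_{\bm{\omega}}\Vert_{2,\infty}=\Vert\Delta_{\bm{\omega}}\Vert_{2,\infty}$. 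Hence $\Delta^\top\beta$ ranges over a symmetric interval $[-M,M]$ with $M=\sup_{\Vert\Delta_{\bm{\omega}}\Vert_{2,\infty}\leq\delta}\Delta^\top\beta$, and therefore the maximum of $|(\bm{X}_i^\top\beta-Y_i)+\Delta^\top\beta|$ equals $|\bm{X}_i^\top\beta-Y_i|+M$; the sign of the maximizing $\Delta^\top\beta$ can always be flipped to match that of $\bm{X}_i^\top\beta-Y_i$.

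It then remains only to evaluate $M$. By positive homogeneity, $M=\delta\sup_{\Vert\Delta_{\bm{\omega}}\Vert_{2,\infty}\leq1}\Delta^\top\beta$, and this supremum is by definition the dual of the $(2,\infty)$-norm with weight $\bm{\omega}$ evaluated at $\beta$. Applying Proposition \ref{weightednorm} with $r=2$ and $s=\infty$, so that the conjugate exponents are $q=2$ (from $1/r+1/q=1$) and $t=1$ (from $1/s+1/t=1$), identifies this dual norm as the $(2,1)$-norm with weight $\bm{\omega}^{-1}$, giving $M=\delta\Vert\beta_{\bm{\omega}^{-1}}\Vert_{2,1}$. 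Substituting back, summing over $i$, dividing by $n$, and minimizing over $\beta$ produces exactly \eqref{groupat}.

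The only step that requires genuine care is the identification of $M$ with the weighted dual norm together with the simultaneous sign alignment: one must confirm that the $\Delta$ attaining $\sup\Delta^\top\beta$ can be rescaled and sign-reversed within the feasible set without changing its norm, which is precisely what the symmetry observation guarantees. I would also state the weight and conjugate-exponent bookkeeping explicitly ($\bm{\omega}$ versus $\bm{\omega}^{-1}$, and the pairing of $(2,\infty)$ with $(2,1)$), since that is the one place an index or reciprocal error could slip in; once Proposition \ref{weightednorm} is invoked with these exponents, the remaining algebra is routine.
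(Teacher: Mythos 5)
Your proof is correct. It reaches the same destination as the paper but by a more self-contained route: the paper's proof is essentially two citations --- it invokes Proposition 1 of \cite{ribeiro2024regularization} to assert that the inner supremum collapses to the residual plus $\delta$ times the dual norm of the perturbation norm, and then applies Proposition \ref{weightednorm} to identify that dual norm as the weighted $(2,1)$-norm with weights $\bm{\omega}^{-1}$. You instead re-derive the first of these steps from scratch: the decomposition $(\bm{X}_i+\Delta)^\top\beta-Y_i=(\bm{X}_i^\top\beta-Y_i)+\Delta^\top\beta$, the symmetry of the feasible set, the resulting identity $\max_{|t|\leq M}\,|r_i+t| = |r_i|+M$, and the homogeneity argument giving $M=\delta\sup_{\Vert\Delta_{\bm{\omega}}\Vert_{2,\infty}\leq 1}\Delta^\top\beta$, which is the dual norm by definition. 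The second step (the conjugate-exponent bookkeeping $r=2,\ s=\infty \Rightarrow q=2,\ t=1$ with weights inverted) is identical in both. What your version buys is transparency: a reader sees exactly why the sign of the optimal perturbation can be aligned with the residual, which the paper leaves buried in the cited reference; what the paper's version buys is brevity and explicit continuity with the classic ($\ell_\infty$) case in Proposition \ref{proposition}. One incidental benefit of your derivation: it makes plain that the residual inside the absolute value should read $\bm{X}_i^\top\beta-Y_i$ (with no $\Delta$), confirming that the appearance of $\Delta$ in the displayed equation \eqref{groupat} of the proposition statement is a typographical slip rather than a substantive dependence.
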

\subsection{Convergence Analysis}\label{congroupat}
In this subsection, we deliver the convergence analysis of the prediction error of the estimator obtained from group adversarial training under $(2,\infty)$-perturbation, i.e., problem \eqref{groupinf}.

First, we clarify some notations for subsequent analysis.
In terms of the group structure of the input variable and the perturbation, we only consider non-overlapped cases.
Assume that the index set $\{1,...,p\}$ has the prescribed (disjoint) partition $\{1,...,p\}=\bigcup_{l=1}^L G_l$. We use $p_l$ to denote the cardinality of each group, i.e.,  $\vert G_l\vert=p_l$. If we use $J\subset \{1,..., L\}$ to denote a set of groups, we define $G_J=\bigcup _{l\in J}G_l$.

We make the following assumption before we proceed to derive the convergence analysis.
\begin{assumption}[Group Restricted Eigenvalue Condition]
    The matrix $\bm{X}\in\mathbb{R}^{n\times p}$ satisfies the group restricted eigenvalue condition if there exists a positive number $\kappa=\kappa(g,c_2)>0$ such that
\[\min\left\{\frac{\Vert \bm{X}v\Vert_2}{\sqrt{n}\Vert v_{G_J}\Vert_2}: \vert J\vert\leq g, v\in\mathbb{R}^p\backslash \{\bm{0}\}, \sum_{l\in J^c} \frac{1}{\omega^l} \Vert v^l\Vert_2\leq c_2\sum_{l\in J} \frac{1}{\omega^l} \Vert v^l\Vert_2\right\}\geq \kappa,\]
where $J$ is some subset of $\{1,...,L\}$.
\end{assumption}
In the sequel, we use the notation $\GRE(g,c_2)$ to denote the restricted eigenvalue condition w.r.t. the cardinality $g$ of the index set $J$ and the constant $c_2$ in the constrained cone, i.e.,  $\sum_{l\in J^c} \frac{1}{\omega^l} \Vert v^l\Vert_2\leq c_2\sum_{l\in J} \frac{1}{\omega^l} \Vert v^l\Vert_2$. Group restricted eigenvalue condition is an extension of the restricted eigenvalue condition and can be used in the theoretical analysis for the group LASSO, seeing \cite{lounici2011oracle}.

\begin{theorem}[Prediction Error Analysis for Group Adversarial Training]\label{mainresult2}
    Consider the group adversarial training problem \eqref{groupinf} satisfying  \[\frac{2\Vert\left(\bm{X}^\top \bm{\epsilon}\right)^l\Vert_2}{\Vert\bm{\epsilon}\Vert_1}\leq \frac{\delta}{\omega^l},\] 
    If $\beta_\ast$ is supported on a subset $G_J$ of $\{1,...,p\}$ where $\vert J\vert\leq g$, and the design matrix $\bm{X}$ satisfies $\GRE(g,3)$ with parameter $\kappa(g,3)$,  then we have that
\[\frac{1}{2n}\Vert \bm{X}(\widetilde{\beta}-\beta_\ast)\Vert_2^2\leq 3\delta^2  \sum_{l\in J} \frac{1}{\omega_l^2}\max\left\{   \frac{9}{\kappa^2(g,3)}\left(\frac{\Vert\bm{\epsilon}\Vert_1}{n}\right)^2,164\Vert\beta_\ast\Vert_2^2\right\}. \]
\end{theorem}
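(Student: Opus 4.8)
The plan is to follow the proof of Theorem \ref{mainresult} verbatim in structure, replacing the $\ell_1$-norm by the weighted group norm $\Vert\cdot_{\bm{\omega}^{-1}}\Vert_{2,1}$, the sparsity level $s$ by the weighted group size $\sum_{l\in J}\omega_l^{-2}$, and $\RE(s,3)$ by $\GRE(g,3)$. Write $\widehat u=\widetilde\beta-\beta_\ast$ and, for $v\in\mathbb R^p$, abbreviate $N(v)=\sum_{l=1}^L\frac1{\omega^l}\Vert v^l\Vert_2$, together with its restrictions $N_J(v)=\sum_{l\in J}\frac1{\omega^l}\Vert v^l\Vert_2$ and $N_{J^c}(v)=\sum_{l\in J^c}\frac1{\omega^l}\Vert v^l\Vert_2$. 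First I would pass to the dual formulation of Proposition \ref{dualformulationofgroupat}, so that $\widetilde\beta$ minimizes $\frac1n\sum_i(|\bm X_i^\top\beta-Y_i|+\delta N(\beta))^2$, substitute $Y_i=\bm X_i^\top\beta_\ast+\epsilon_i$ into the optimality inequality (value at $\widetilde\beta$ $\le$ value at $\beta_\ast$), expand both squares, and cancel the common $\frac1n\Vert\bm\epsilon\Vert_2^2$. Dropping the nonnegative cross term $\frac{2\delta N(\widetilde\beta)}{n}\Vert\bm X\widehat u-\bm\epsilon\Vert_1$ yields the basic inequality
\[\frac1n\Vert\bm X\widehat u\Vert_2^2+\delta^2N(\widetilde\beta)^2\le\frac2n\langle\bm X\widehat u,\bm\epsilon\rangle+2\delta\frac{\Vert\bm\epsilon\Vert_1}{n}N(\beta_\ast)+\delta^2N(\beta_\ast)^2.\]

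Second, I would control the noise interaction by a group-H\"older step, $\langle\bm X\widehat u,\bm\epsilon\rangle=\sum_l\langle\widehat u^l,(\bm X^\top\bm\epsilon)^l\rangle\le\sum_l\Vert\widehat u^l\Vert_2\Vert(\bm X^\top\bm\epsilon)^l\Vert_2$, and then invoke the hypothesis $2\Vert(\bm X^\top\bm\epsilon)^l\Vert_2\le\frac{\delta}{\omega^l}\Vert\bm\epsilon\Vert_1$ to obtain $\frac2n\langle\bm X\widehat u,\bm\epsilon\rangle\le\lambda N(\widehat u)$ with $\lambda:=\delta\Vert\bm\epsilon\Vert_1/n$. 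The two signal terms on the right read $2\lambda N(\beta_\ast)+\delta^2N(\beta_\ast)^2$, and since $\beta_\ast$ is supported on $G_J$, Cauchy--Schwarz gives $N(\beta_\ast)=\sum_{l\in J}\frac1{\omega^l}\Vert\beta_\ast^l\Vert_2\le(\sum_{l\in J}\omega_l^{-2})^{1/2}\Vert\beta_\ast\Vert_2$, which is where the factor $\sum_{l\in J}\omega_l^{-2}$ enters.

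Third, and this is the crux, I would split on whether the group cone $N_{J^c}(\widehat u)\le 3N_J(\widehat u)$ holds. If it does, then $N(\widehat u)\le 4N_J(\widehat u)\le 4\,(\sum_{l\in J}\omega_l^{-2})^{1/2}\Vert\widehat u_{G_J}\Vert_2$ by Cauchy--Schwarz, and $\GRE(g,3)$ gives $\Vert\widehat u_{G_J}\Vert_2\le\Vert\bm X\widehat u\Vert_2/(\kappa(g,3)\sqrt n)$; dropping $\delta^2N(\widetilde\beta)^2\ge0$ and substituting these into the basic inequality produces, with $w:=\sum_{l\in J}\omega_l^{-2}$ and $V:=\Vert\bm X\widehat u\Vert_2/\sqrt n$, a quadratic inequality of the shape $V^2\le c_1\lambda\kappa^{-1}\sqrt w\,V+c_2\lambda\sqrt w\Vert\beta_\ast\Vert_2+\delta^2w\Vert\beta_\ast\Vert_2^2$. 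Solving it and folding the cross term by AM--GM gives $V^2\lesssim\delta^2w\max\{\kappa^{-2}(\Vert\bm\epsilon\Vert_1/n)^2,\Vert\beta_\ast\Vert_2^2\}$, i.e. the stated bound, with the constants $3$, $9/\kappa^2$ and $164$ tracked through the quadratic-formula step. If instead the cone fails, then $N(\widetilde\beta)\ge N_{J^c}(\widehat u)$ and the retained penalty forces, via $\delta^2N(\widetilde\beta)^2-\delta^2N(\beta_\ast)^2\le\lambda N(\widehat u)+2\lambda N(\beta_\ast)$ (the basic inequality with $\frac1n\Vert\bm X\widehat u\Vert_2^2$ discarded), a quadratic bound on $N_{J^c}(\widehat u)$ and hence on $N(\widehat u)$ of order $\lambda/\delta^2+\delta^{-1}\sqrt{\lambda N(\beta_\ast)}$; plugging this back shows the right-hand side of the basic inequality is already dominated by the signal term $\delta^2w\Vert\beta_\ast\Vert_2^2$ together with a $\kappa$-free noise contribution that is absorbed by the first element of the maximum.

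The main obstacle is the squared penalty $\delta^2N(\beta)^2$. Unlike the linear penalty of ordinary LASSO or group-LASSO analysis, the difference $\delta^2(N(\widetilde\beta)^2-N(\beta_\ast)^2)$ does not telescope, so the cone condition cannot be read off directly from optimality; this is precisely why the argument must branch on whether the cone holds, and it is this squared term that is responsible for the second entry $164\Vert\beta_\ast\Vert_2^2$ of the maximum, a bias contribution absent from the pure group-LASSO prediction rate. The remaining effort is bookkeeping: keeping every constant aligned with those of Theorem \ref{mainresult} across the Cauchy--Schwarz, $\GRE$, and quadratic-inequality steps.
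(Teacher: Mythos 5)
Your ``basic inequality'' step contains the fatal loss. When you discard the nonnegative cross term $\frac{2\delta}{n}N(\widetilde\beta)\Vert\bm X\widehat u-\bm\epsilon\Vert_1$ wholesale, you throw away the piece $-\frac{2\delta}{n}\Vert\bm\epsilon\Vert_1 N(\widetilde\beta)$ that is needed to cancel the $+\frac{2\delta}{n}\Vert\bm\epsilon\Vert_1 N(\beta_\ast)$ on the right. The surviving term $2\delta\frac{\Vert\bm\epsilon\Vert_1}{n}N(\beta_\ast)\le 2\delta\frac{\Vert\bm\epsilon\Vert_1}{n}\bigl(\sum_{l\in J}\omega_l^{-2}\bigr)^{1/2}\Vert\beta_\ast\Vert_2$ cannot be absorbed into the claimed bound: absorbing it into $\max\bigl\{\kappa^{-2}\delta^2 w(\Vert\bm\epsilon\Vert_1/n)^2,\ \delta^2 w\Vert\beta_\ast\Vert_2^2\bigr\}$ (with $w=\sum_{l\in J}\omega_l^{-2}$) by AM--GM requires $\delta\sqrt w\gtrsim\kappa$, which fails in the regime of interest: under the choice of Corollary \ref{grouphighprob1}, $\delta^2 w\asymp(\vert G_J\vert+g\log L)/n\to 0$ while $\kappa$ is a constant, so your leftover term is of order $n^{-1/2}$ against a claimed bound of order $n^{-1}$ --- you would only recover the $1/\sqrt n$ rate of \cite{ribeiro2024regularization} that the theorem is designed to beat. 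The same obstruction invalidates your Case B, where the ``$\kappa$-free noise contribution'' $(\Vert\bm\epsilon\Vert_1/n)^2=\lambda^2/\delta^2$ is claimed to be ``absorbed by the first element of the maximum'': that element is $\kappa^{-2}\delta^2 w(\Vert\bm\epsilon\Vert_1/n)^2$, so absorption again needs $\delta^2 w\gtrsim\kappa^2$.

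The paper's proof keeps exactly what you dropped: it lower-bounds $\Vert\bm Y-\bm X\widetilde\beta\Vert_1\ge\Vert\bm\epsilon\Vert_1-\Vert\bm X\widehat u\Vert_1$, so the noise terms combine into $\frac{\delta}{2n}\Vert\bm\epsilon\Vert_1\bigl(N(\widehat u)+2N(\beta_\ast)-2N(\widetilde\beta)\bigr)$, and the case split is on the sign of this bracket, not directly on the cone. When the bracket is nonnegative, the cone condition \emph{follows} from it (since $N(\beta_\ast)-N(\widetilde\beta)\le N_J(\widehat u)-N_{J^c}(\widehat u)$), and the bracket itself is bounded by a constant times $N_J(\widehat u)$, hence via $\GRE(g,3)$ by a linear-in-$V$ term that the quadratic-inequality step can handle; when the bracket is negative, all noise terms are discarded and the bound reduces to controlling $\delta VN(\widetilde\beta)$. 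Both branches, and the bad term $\frac{\delta}{n}\Vert\bm X\widehat u\Vert_1N(\widetilde\beta)$ that the paper's retention creates, require the auxiliary bound $\Vert\widetilde\beta_{\bm\omega^{-1}}\Vert_{2,1}\le 9\Vert{\beta_\ast}_{\bm\omega^{-1}}\Vert_{2,1}$ (Lemma \ref{upperboundofweighted}), proved separately from the KKT conditions of the dual problem via a discriminant argument; no analogue of this lemma appears in your proposal, and without it even the repaired case analysis does not close. Your high-level intuition about the non-telescoping squared penalty is right, but the branching variable, the retained noise cancellation, and the auxiliary lemma are the three ingredients your outline is missing.
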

Theorem \ref{mainresult2} shows that the upper bound of the prediction error mainly depends on the weight $\bm{\omega}$ and perturbation magnitude $\delta$. We apply the arguments in concentration inequalities and obtain the convergence rate in the following corollary.
\begin{corollary}\label{grouphighprob1}
      Consider the group adversarial training problem \eqref{groupinf} satisfying  \[
      \frac{\delta}{\omega_l}= \frac{2}{\sqrt{\frac{2}{\pi}}-\frac{1}{10}}\sqrt{\frac{3p_l+9\log L}{n}},\]  and  $\Psi_l=\bm{X}^\top_{G_l}\bm{X}_{G_l}/n=I_{p_l\times p_l}$, where $\bm{X}_{G_l}$ denotes the $n\times p_l$ sub-matrix of $\bm{X}$ formed by the columns indexed by $G_l$.
      If $\beta_\ast$ is supported on a subset $G_J$ of $\{1,...,p\}$ where $\vert J\vert\leq g$, and the design matrix $\bm{X}$ satisfies $\GRE(g,3)$ with parameter $\kappa(g,3)$,  then we have that
\[
 \frac{1}{2n}\Vert \bm{X}(\widetilde{\beta}-\beta_\ast)\Vert_2^2\leq432\frac{\vert G_J\vert +g \log L} {n}\max\left\{   \frac{9}{\kappa^2(s,3)}\left(\frac{\Vert\bm{\epsilon}\Vert_1}{n}\right)^2,164\Vert\beta_\ast\Vert_2^2\right\}\]
 holds with a probability greater than $1-2\exp\left( -C_1n\right)-2/L$, where $C_1$ is a positive constant.
\end{corollary}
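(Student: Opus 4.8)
The plan is to upgrade the deterministic estimate of Theorem \ref{mainresult2} to a high-probability bound by controlling the noise, exactly as Corollary \ref{highprob} does in the ungrouped case. Theorem \ref{mainresult2} yields the advertised rate on the event
\[
\mathcal{E}=\bigcap_{l=1}^{L}\left\{\frac{2\Vert(\bm{X}^\top\bm{\epsilon})^l\Vert_2}{\Vert\bm{\epsilon}\Vert_1}\leq\frac{\delta}{\omega_l}\right\},
\]
so the task reduces to (i) showing $\mathcal{E}$ holds with probability at least $1-2\exp(-C_1n)-2/L$ for the prescribed $\delta/\omega_l$, and (ii) substituting that choice of $\delta/\omega_l$ into the theorem and simplifying to the constant $432$.

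To handle the denominator I would lower-bound $\Vert\bm{\epsilon}\Vert_1$. The summands $\vert\epsilon_i\vert$ are i.i.d.\ with mean $\sigma\sqrt{2/\pi}$ and are sub-Gaussian, so a standard concentration inequality gives $\Vert\bm{\epsilon}\Vert_1/n\geq\sigma(\sqrt{2/\pi}-1/10)$ with probability at least $1-2\exp(-C_1n)$; this is the identical estimate already used in Corollary \ref{highprob}. For the numerator I would exploit the within-group orthonormality $\Psi_l=\bm{X}_{G_l}^\top\bm{X}_{G_l}/n=I_{p_l}$: under Assumption \ref{assume2} the block $(\bm{X}^\top\bm{\epsilon})^l=\bm{X}_{G_l}^\top\bm{\epsilon}$ is Gaussian with covariance $\sigma^2\bm{X}_{G_l}^\top\bm{X}_{G_l}=\sigma^2 n I_{p_l}$, hence $\Vert(\bm{X}^\top\bm{\epsilon})^l\Vert_2^2/(\sigma^2 n)$ is $\chi^2_{p_l}$. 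Feeding the Laurent--Massart tail $P(\chi^2_{p_l}\geq p_l+2\sqrt{p_l t}+2t)\leq e^{-t}$ with $t=2\log L$ and using $2\sqrt{2p_l\log L}\leq p_l+2\log L$, each block satisfies $\Vert(\bm{X}^\top\bm{\epsilon})^l\Vert_2^2\leq\sigma^2 n(3p_l+9\log L)$ with failure probability at most $L^{-2}$; a union bound over the $L$ groups keeps the total failure at most $1/L\leq 2/L$.

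On the intersection of these two events the inequality defining $\mathcal{E}$ follows by dividing $\Vert(\bm{X}^\top\bm{\epsilon})^l\Vert_2\leq\sigma\sqrt{n(3p_l+9\log L)}$ by $\Vert\bm{\epsilon}\Vert_1\geq n\sigma(\sqrt{2/\pi}-1/10)$, which reproduces $\delta/\omega_l$ up to the factor $2$. It then remains to insert $(\delta/\omega_l)^2=\frac{4}{(\sqrt{2/\pi}-1/10)^2}\cdot\frac{3p_l+9\log L}{n}$ into Theorem \ref{mainresult2}; using $\sum_{l\in J}p_l=\vert G_J\vert$ and $\vert J\vert\leq g$ gives $\sum_{l\in J}(3p_l+9\log L)\leq 9(\vert G_J\vert+g\log L)$, and bounding $(\sqrt{2/\pi}-1/10)^2\geq 1/4$ collapses the prefactor $3\cdot 4\cdot 9\cdot 4$ to exactly $432$, matching the stated rate.

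The main obstacle is the uniform control of $\max_l\Vert(\bm{X}^\top\bm{\epsilon})^l\Vert_2$ in the second concentration step. Unlike Corollary \ref{highprob}, where one bounds a maximum of $\vert$Gaussian$\vert$ coordinates, here each block is the $\ell_2$ norm of a $p_l$-dimensional Gaussian vector, so the relevant tail is $\chi^2$ rather than Gaussian; the deviation parameter $t=2\log L$ must be calibrated so that the $\chi^2$ fluctuation $2\sqrt{p_l t}+2t$ is absorbed into the $3p_l+9\log L$ budget while the union-bound failure probability still collapses to $2/L$.
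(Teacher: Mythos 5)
Your proposal is correct, and its skeleton matches the paper's proof exactly: reduce to the event on which Theorem \ref{mainresult2} applies, reuse the lower bound $\Vert\bm{\epsilon}\Vert_1/n \geq \sigma\bigl(\sqrt{2/\pi}-1/10\bigr)$ from the proof of Corollary \ref{highprob}, control the block norms $\Vert(\bm{X}^\top\bm{\epsilon})^l\Vert_2$ uniformly in $l$, then substitute the prescribed $\delta/\omega_l$ and collapse the constants to $432$ via $3\cdot 16\cdot 9$, just as you do. The only genuine difference is how the block concentration is obtained. The paper invokes Lemma 3.1 of \cite{lounici2011oracle} as a black box, which gives $\frac{2}{n}\Vert(\bm{X}^\top\bm{\epsilon})^l\Vert_2\leq\frac{2\sigma}{\sqrt{n}}\sqrt{\tr(\Psi_l)+2\vertiii{\Psi_l}\bigl(4\log L+\sqrt{2p_l\log L}\bigr)}$ with probability at least $1-2/L$, and only then specializes $\Psi_l=I_{p_l}$ to reach the budget $3p_l+9\log L$. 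You instead exploit $\Psi_l=I_{p_l}$ from the start: the block $(\bm{X}^\top\bm{\epsilon})^l$ is exactly $\mathcal{N}(0,\sigma^2 n I_{p_l})$, so $\Vert(\bm{X}^\top\bm{\epsilon})^l\Vert_2^2/(\sigma^2 n)\sim\chi^2_{p_l}$, and the Laurent--Massart tail with $t=2\log L$ plus a union bound over the $L$ groups does the job. Your derivation is self-contained and in fact slightly sharper (you obtain $2p_l+6\log L$ and failure probability $1/L$, comfortably within the stated $3p_l+9\log L$ and $2/L$), whereas the paper's citation-based step would tolerate general $\Psi_l$ through $\tr(\Psi_l)$ and $\vertiii{\Psi_l}$, using the identity assumption only to clean up constants. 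Both arguments yield the corollary as stated; the choice between them is a trade-off between self-containedness and generality of the intermediate concentration bound.
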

\begin{remark}
Note that we assume the gram matrix satisfies that $\bm{X}^\top_{G_l}\bm{X}_{G_l}/n=I_{p_l\times p_l}$. This is a standard assumption in the theoretical analysis in sparse high-dimensional linear regression, seeing \cite{lounici2011oracle,nardi2008asymptotic}.
\end{remark}

Similar to the analytic investigations in Section \ref{conat}, the convergence rate of the prediction error could be further simplified in the following corollary if the $\ell_2$-norm of the ground-truth parameter $\beta_\ast$ and error variance $\sigma^2$ are bounded.
    \begin{corollary}\label{highprob2}
Under the assumptions stated in Corollary \ref{grouphighprob1}, suppose there exists a finite positive constant $R$ such that  \[2\sqrt{41}\Vert \beta_\ast\Vert_2\leq R,\quad \sigma < \frac{1}{6}\kappa(g,3)R, \] then we have that
    \[\frac{1}{2n}\Vert \bm{X}(\widetilde{\beta}-\beta_\ast)\Vert_2^2\leq  432\frac{\vert G_J\vert +g \log L} {n} R^2\]
    holds with a probability greater than $1-2/L-2\exp\left( -C_1n\right)-\exp(-n)$, where $C_1$ is a positive constant.
\end{corollary}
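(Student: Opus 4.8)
The plan is to treat Corollary \ref{highprob2} as a direct simplification of Corollary \ref{grouphighprob1}, exactly paralleling how Corollary \ref{highprobsim} simplifies Corollary \ref{highprob}. Since all hypotheses of Corollary \ref{grouphighprob1} are assumed, I may invoke its conclusion verbatim: on an event of probability at least $1-2\exp(-C_1 n)-2/L$, the prediction error $\frac{1}{2n}\Vert\bm{X}(\widetilde{\beta}-\beta_\ast)\Vert_2^2$ is bounded by the prefactor $432\frac{|G_J|+g\log L}{n}$ times the quantity $M:=\max\{\frac{9}{\kappa^2(g,3)}(\Vert\bm{\epsilon}\Vert_1/n)^2,\,164\Vert\beta_\ast\Vert_2^2\}$. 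It therefore suffices to show $M\le R^2$ on a further high-probability event, after which the two bounds multiply to give the stated result.

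I would control the two arguments of $M$ separately. The second is deterministic: the hypothesis $2\sqrt{41}\Vert\beta_\ast\Vert_2\le R$ squares to $164\Vert\beta_\ast\Vert_2^2=(2\sqrt{41}\Vert\beta_\ast\Vert_2)^2\le R^2$, so it needs no probabilistic input. For the first, I would rewrite the target $\frac{9}{\kappa^2(g,3)}(\Vert\bm{\epsilon}\Vert_1/n)^2\le R^2$ as the scalar inequality $\Vert\bm{\epsilon}\Vert_1/n\le \kappa(g,3)R/3$; the hypothesis $\sigma<\frac{1}{6}\kappa(g,3)R$ rearranges to $\kappa(g,3)R/3>2\sigma$, so it is enough to establish the noise-only bound $\Vert\bm{\epsilon}\Vert_1/n\le 2\sigma$.

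The remaining work is the concentration of $\Vert\bm{\epsilon}\Vert_1/n=\frac{1}{n}\sum_{i=1}^n|\epsilon_i|$. Since $\epsilon_i\sim\mathcal{N}(0,\sigma^2)$, each $|\epsilon_i|$ is folded Gaussian with mean $\sigma\sqrt{2/\pi}<2\sigma$, and $\bm{\epsilon}\mapsto\frac{1}{n}\Vert\bm{\epsilon}\Vert_1$ is $1/\sqrt{n}$-Lipschitz in the Euclidean metric; Gaussian concentration (or a Bernstein bound for i.i.d.\ sub-exponential summands) then gives $\Vert\bm{\epsilon}\Vert_1/n\le 2\sigma$ on an event of probability at least $1-\exp(-n)$, the very noise event used in the proof of Corollary \ref{highprobsim}. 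This step is genuinely identical to the classical case, since $\Vert\bm{\epsilon}\Vert_1$ does not see the group structure. Intersecting with the event from Corollary \ref{grouphighprob1} and taking a union bound yields $M\le R^2$, hence the claimed error bound, with probability at least $1-2/L-2\exp(-C_1 n)-\exp(-n)$. The only nonroutine point is calibrating the constants in the concentration step so that the threshold $2\sigma$ afforded by $\sigma<\frac{1}{6}\kappa(g,3)R$ delivers the clean tail $\exp(-n)$; everything else is bookkeeping inherited from the group restricted eigenvalue analysis.
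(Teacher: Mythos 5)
Your proposal is correct and takes essentially the same route as the paper, whose proof of this corollary is exactly the reduction you describe: invoke Corollary \ref{grouphighprob1}, bound the max by $R^2$ via the deterministic identity $164\Vert\beta_\ast\Vert_2^2=(2\sqrt{41}\Vert\beta_\ast\Vert_2)^2\le R^2$ and the noise event $\Vert\bm{\epsilon}\Vert_1/n\le 2\sigma$ of probability at least $1-\exp(-n)$ from the proof of Corollary \ref{highprobsim}, then take a union bound. One small calibration note: the Gaussian Lipschitz-concentration route you float gives, at the threshold $2\sigma$, only a tail of $\exp\left(-n\left(2-\sqrt{2/\pi}\right)^2/2\right)\approx\exp(-0.72n)$, so to obtain the clean $\exp(-n)$ you should rely (as you ultimately do) on the Chernoff/MGF bound used in Corollary \ref{highprobsim}, which yields $\Vert\bm{\epsilon}\Vert_1/n\le\sqrt{2\log 2+2}\,\sigma\le 2\sigma$ with exactly that tail.
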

Corollary \ref{highprob2} indicates that the upper bound of the associated prediction error in group adversarial training under $(2,\infty)$-perturbation is of the order $(\vert G_J\vert+g\log L)/n$.
Recall that $L$ is the number of prescribed groups for the $p$-dimensional variable, the ground-truth parameter $\beta_\ast\in\mathbb{R}^p$ is supported by a subset of the $L$ groups, the subset is denoted by $J\subset\{1,..., L\}$. The cardinality of the support subset $J$ is $g$. We also use $G_J\subset\{1,...,p\}$ to denote all the indexes included in $J$.

It follows from Corollary \ref{highprobsim} that the convergence rate of the prediction error for the classic adversarial training under $\ell_\infty$-perturbation is of the order $s\log p/n$, where $s$ denotes the cardinality of the support set of $\beta_\ast$.
Then, we can conclude that if $\vert G_J\vert/s\ll\log p$ and $g\ll s$, then the group adversarial training is superior to the classic adversarial training.
In essence, if the sparsity pattern performs a good group structure, i.e., most of the nonzero components can be captured in $J$, then the group adversarial training procedure can provide an improved upper bound for the prediction error.
Not surprisingly, the aforementioned comparison and conditions are consistent with the discussions about the difference between LASSO and group LASSO in \cite{huang2010benefit}.
\section{Numerical Experiments}\label{numericalexperiments}
In this section, we will run numerical experiments to observe the empirical performances of (group) adversarial training in high-dimensional linear regression.

We consider the following model to generate synthetic data:
The response variable $Y$ is generated by the Gaussian linear model, as stated in Assumption \ref{assume2}.
The standard deviation of the error $\bm{\epsilon}$ is chosen as $0.1$.
The ground truth parameter $\beta_\ast$ is $500$-dimensional. 
The first four components are $[0.1,0.2,0.15,0.25]$, the last four components are $[0.9,0.95,1,1.05]$, and the other components are zero. 

We run adversarial training under $\ell_\infty$-perturbation and group adversarial training under $(2,\infty)$-perturbation to give the estimation for the ground-truth parameter $\beta_\ast$, respectively.
As suggested in Corollary \ref{highprob} and Corollary \ref{grouphighprob1},
in the adversarial training, the perturbation magnitude is chosen in the order of $1/\sqrt{n}$;
in the group adversarial training, the ratio of the perturbation magnitude and the perturbation weight is chosen in the order of $1/\sqrt{n}$.
For the group adversarial training, we divide the $p$-dimensional parameter equally into 125 groups of size 4.
The sample sizes are chosen $\{50,100,150,200,250,300,350,400\}$.
In terms of computation, we apply the dual formulations, i.e., problem \eqref{mainprob} and problem \eqref{groupat}, and solve these convex optimization problems using the CVXPY toolbox \cite{diamond2016cvxpy}.

We first plot the coefficient estimation paths of adversarial training in Figure \ref{coefficentestimationpath}. Both adversarial training and group adversarial training can shrink the parameter estimation, while group adversarial training performs a better shrinkage effect and may result in more accurate estimations.

We run (group) adversarial training five times per sample size and compute the associated prediction errors.
The results are plotted in Figure \ref{predictionerror}.
We plot the curve of $\ln(\text{prediction error})$ versus $\ln(\text{sample size})$. The error bars are also shown.
We can observe that the slopes of two curves are approximately equal to $-1$, which is consistent with our theoretical analysis, where we have proved that the prediction error for high-dimensional (group) adversarial training is of the order $1/n$.
Further, the curve and error bar of group adversarial training are below those of adversarial training, indicating the superiority of group adversarial training. 
This conclusion is also consistent with our theoretical analysis that if the model has a good group structure, group adversarial training has a lower order of prediction error.
 \begin{figure}[htbp]
	\centering
	{\includegraphics[width=0.5\textwidth]{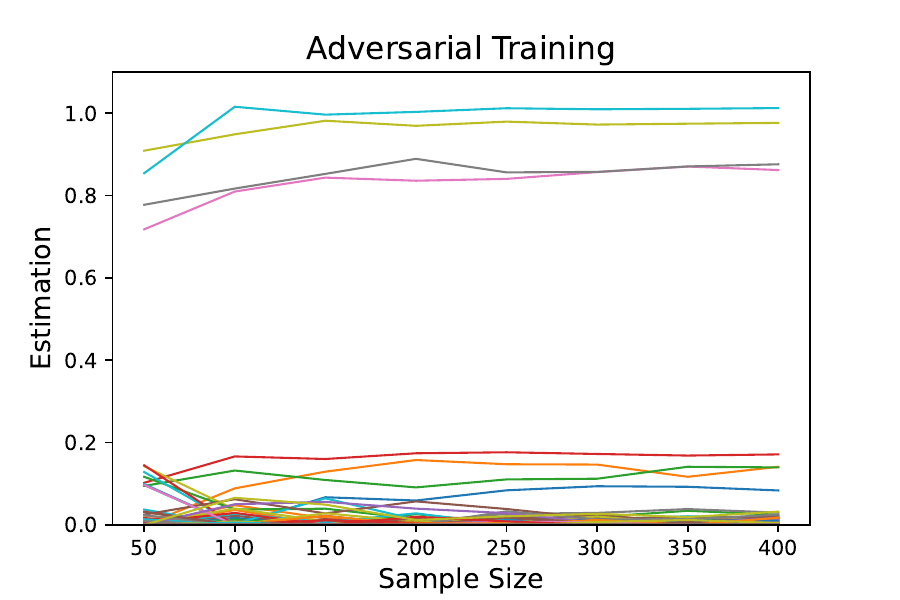}}\hspace{-1em}
	{\includegraphics[width=0.5\textwidth]{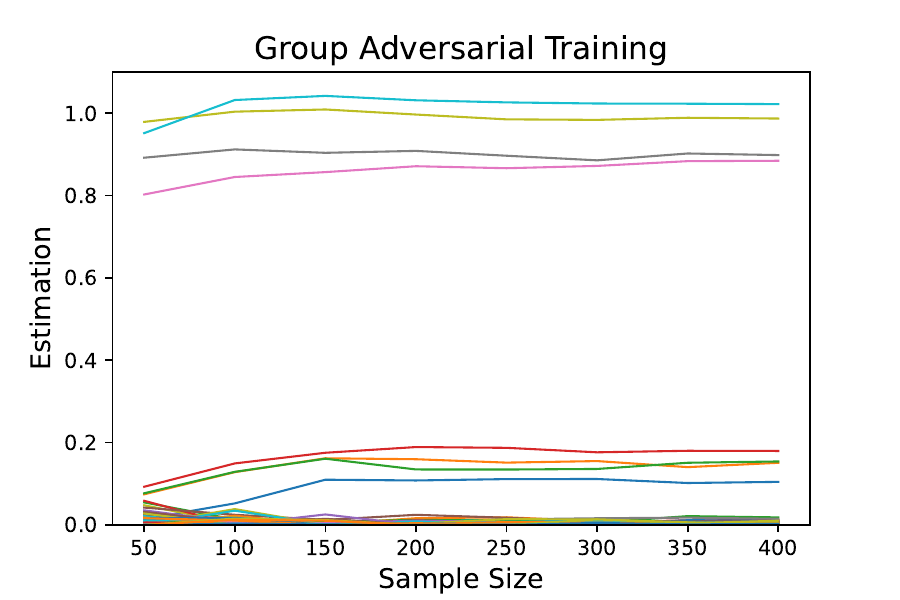}}
 \caption{Coefficient Estimation Path}
  \label{coefficentestimationpath}
    \end{figure}

     \begin{figure}[htbp]
	\centering
	{\includegraphics[width=0.6\textwidth]{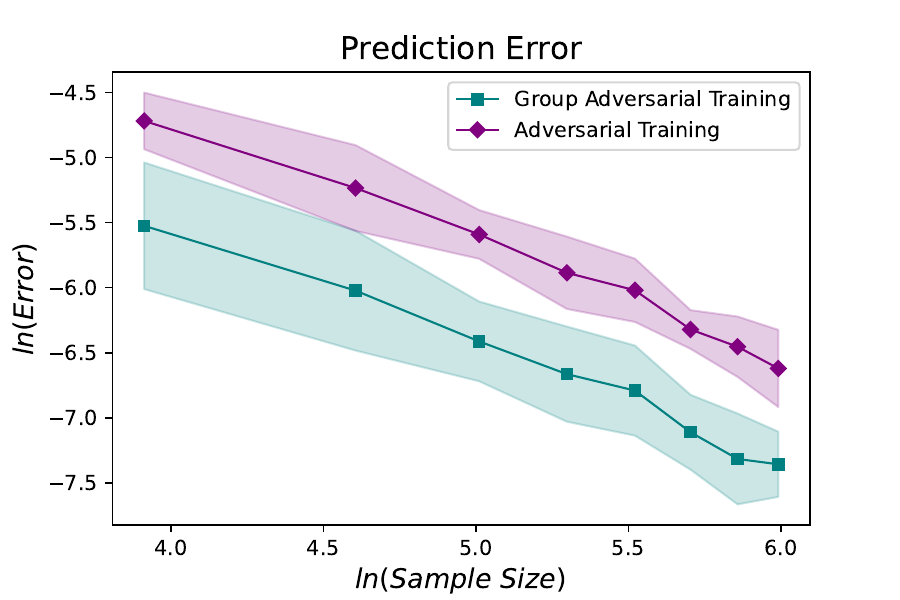}}
 \caption{Prediction Error}
 \label{predictionerror}
    \end{figure}

\section{Discussions}\label{discussion}
This paper reveals the statistical optimality of adversarial training under $\ell_\infty$-perturbation in high dimensional linear regression and discusses potential improvements that can be achieved by group adversarial training.
In the future, we may generalize the analysis in linear regression to broader statistical models, e.g., the generalized linear model and other parametric models.
Also, since the prediction errors are investigated in this paper, we will consider analyzing estimation errors as our next step. 
More advanced analytical future work may use the primal-dual witness technique for the non-asymptotic variable-selection analysis.
\bibliographystyle{apalike}
\bibliography{at.bib}

\newpage
\appendix
{\centering\Large\textbf{Appendix}}
\section{Proof of Theorem \ref{mainresult}}
We first give the upper bound of the $\Vert\widehat{\beta}\Vert_1$ in terms of $\Vert\beta_\ast\Vert_1$.
\begin{lemma}[Upper Bound of $\Vert\widehat{\beta}\Vert_1$]\label{lemmaupperbound}Under conditions stated in Theorem \ref{mainresult}, we have that 
    \[\Vert\widehat{\beta}\Vert_1\leq9\Vert\beta_\ast\Vert_1.\]
\end{lemma}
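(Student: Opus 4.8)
The plan is to combine the dual reformulation from Proposition \ref{proposition} with the defining optimality of $\widehat{\beta}$. Since $\widehat{\beta}$ minimizes the convex objective $Q(\beta)=\frac{1}{n}\sum_{i=1}^n\left(\vert\bm{X}_i^\top\beta-Y_i\vert+\delta\Vert\beta\Vert_1\right)^2$, I would start from the basic inequality $Q(\widehat{\beta})\le Q(\beta_\ast)$. Writing $\bm{u}=\widehat{\beta}-\beta_\ast$ and using Assumption \ref{assume2} to substitute $Y_i=\bm{X}_i^\top\beta_\ast+\epsilon_i$, the residual at $\beta_\ast$ is exactly $-\bm{\epsilon}$, so upon expanding the squares both sides contain the common term $\frac{1}{n}\Vert\bm{\epsilon}\Vert_2^2$ and it cancels. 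Denoting $A=\frac{1}{n}\Vert\bm{X}\bm{u}-\bm{\epsilon}\Vert_1$ and $B=\frac{1}{n}\Vert\bm{\epsilon}\Vert_1$, what remains is an inequality relating the quadratic term $\frac{1}{n}\Vert\bm{X}\bm{u}\Vert_2^2$, the penalty terms built from $\delta\Vert\widehat{\beta}\Vert_1$ and $\delta\Vert\beta_\ast\Vert_1$, and the noise--design inner product $\frac{2}{n}(\bm{X}^\top\bm{\epsilon})^\top\bm{u}$.

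The next step is to absorb the noise. By H\"older's inequality $\frac{2}{n}(\bm{X}^\top\bm{\epsilon})^\top\bm{u}\le\frac{2}{n}\Vert\bm{X}^\top\bm{\epsilon}\Vert_\infty\Vert\bm{u}\Vert_1$, and the hypothesis \eqref{deltarelation} upgrades $\frac{2}{n}\Vert\bm{X}^\top\bm{\epsilon}\Vert_\infty\le\delta B$, producing the clean bound $\delta B\Vert\bm{u}\Vert_1$. After these reductions I expect a key inequality of the form
\[\tfrac{1}{n}\Vert\bm{X}\bm{u}\Vert_2^2+2\delta\Vert\widehat{\beta}\Vert_1 A+\delta^2\Vert\widehat{\beta}\Vert_1^2\le\delta B\Vert\bm{u}\Vert_1+2\delta\Vert\beta_\ast\Vert_1 B+\delta^2\Vert\beta_\ast\Vert_1^2.\]
From here I would use $\Vert\bm{u}\Vert_1\le\Vert\widehat{\beta}\Vert_1+\Vert\beta_\ast\Vert_1$ and extract a bound on $\Vert\widehat{\beta}\Vert_1$ in two complementary ways. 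In route (i), discarding the nonnegative residual term $2\delta\Vert\widehat{\beta}\Vert_1 A$ while keeping $\delta^2\Vert\widehat{\beta}\Vert_1^2$ yields a quadratic inequality in $\Vert\widehat{\beta}\Vert_1$ that can be solved directly. In route (ii), instead exploiting the reverse triangle inequality $A\ge B-\frac{1}{n}\Vert\bm{X}\bm{u}\Vert_1$ together with $\Vert\bm{X}\bm{u}\Vert_1\le\sqrt{n}\Vert\bm{X}\bm{u}\Vert_2$ and the elementary bound $2xy\le x^2+y^2$ causes the $\frac{1}{n}\Vert\bm{X}\bm{u}\Vert_2^2$ and $\delta^2\Vert\widehat{\beta}\Vert_1^2$ terms to cancel, leaving the linear bound $\Vert\widehat{\beta}\Vert_1\le 3\Vert\beta_\ast\Vert_1+\frac{\delta}{B}\Vert\beta_\ast\Vert_1^2$.

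The main obstacle is that neither bound alone gives a clean multiple of $\Vert\beta_\ast\Vert_1$ uniformly in the perturbation size: the linear bound (ii) is sharp when $\delta\Vert\beta_\ast\Vert_1$ is small relative to the noise level $B$ (where little shrinkage occurs), whereas the quadratic bound (i) is sharp when $\delta\Vert\beta_\ast\Vert_1$ is large (where heavy regularization pushes $\widehat{\beta}$ toward $\bm{0}$ and the comparison with $\beta_\ast$ becomes loose). I would therefore close the argument with a short case split at a threshold such as $\delta\Vert\beta_\ast\Vert_1\le 6B$; the constants are arranged---crucially using the factor $2$ of slack built into \eqref{deltarelation}---so that the two regimes overlap and each delivers $\Vert\widehat{\beta}\Vert_1\le 9\Vert\beta_\ast\Vert_1$. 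The genuinely delicate point is not the noise bound but the self-normalizing cross term $2\delta\Vert\widehat{\beta}\Vert_1 A$, which couples the quantity to be bounded with the unknown mean absolute residual $A$; controlling this term is where the argument does its real work.
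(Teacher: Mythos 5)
Your proposal is correct, and it takes a genuinely different route from the paper. The paper proves this lemma by writing down the first-order (stationarity) condition of the dual objective \eqref{mainprob} with explicit subgradients $w\in\partial\Vert\widehat{\beta}\Vert_1$, $z\in\partial\Vert\bm{X}\widehat{\beta}-\bm{Y}\Vert_1$, taking the inner product with $\widehat{\beta}-\beta_\ast$, and massaging the result into a quadratic inequality in the variable $\Vert\bm{X}(\widehat{\beta}-\beta_\ast)\Vert_2/\sqrt{n}$; nonnegativity of the discriminant then forces $\frac{1}{9}\delta^2\bigl(-11\Vert\widehat{\beta}\Vert_1^2+66\Vert\beta_\ast\Vert_1\Vert\widehat{\beta}\Vert_1+9\Vert\beta_\ast\Vert_1^2\bigr)+\frac{\delta}{3n}\Vert\bm{\epsilon}\Vert_1\bigl(-2\Vert\widehat{\beta}\Vert_1+18\Vert\beta_\ast\Vert_1\bigr)\geq 0$, and since at least one of the two terms must be nonnegative, each branch yields $\Vert\widehat{\beta}\Vert_1\leq 9\Vert\beta_\ast\Vert_1$. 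You instead start from the objective-comparison inequality $Q(\widehat{\beta})\leq Q(\beta_\ast)$ --- which, incidentally, is exactly the paper's starting point \eqref{objectiveinequal} for the \emph{theorem}, not the lemma --- and never touch subgradients; I verified both of your branches: route (ii) does produce $\Vert\widehat{\beta}\Vert_1\leq 3\Vert\beta_\ast\Vert_1+\frac{\delta}{B}\Vert\beta_\ast\Vert_1^2$ after the exact cancellation of $\frac{1}{n}\Vert\bm{X}\bm{u}\Vert_2^2$ and $\delta^2\Vert\widehat{\beta}\Vert_1^2$ you describe, which gives $9\Vert\beta_\ast\Vert_1$ when $\delta\Vert\beta_\ast\Vert_1\leq 6B$, and route (i) gives $t^2\leq\frac{t}{6}+\frac{3}{2}$ for $t=\Vert\widehat{\beta}\Vert_1/\Vert\beta_\ast\Vert_1$ when $\delta\Vert\beta_\ast\Vert_1>6B$, i.e.\ $t\leq\frac{1+\sqrt{217}}{12}<2$, so both regimes close with room to spare. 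Structurally your explicit case split plays the role that the paper's ``at least one of the two terms is nonnegative'' disjunction plays, so the two proofs are cousins, but yours is more elementary (pure value comparison, no first-order condition or subgradient bookkeeping) and makes the origin of the constant $9$ transparent as $3+6$; the paper's stationarity-plus-discriminant device avoids choosing a threshold and eliminates the prediction-error term in one stroke, at the cost of heavier manipulation. One small point of care if you write yours up: route (ii) divides by $B=\Vert\bm{\epsilon}\Vert_1/n$, so the degenerate event $\bm{\epsilon}=\bm{0}$ should be routed through branch (i), where $B=0$ gives $\Vert\widehat{\beta}\Vert_1\leq\Vert\beta_\ast\Vert_1$ directly.
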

\begin{proof}
The proof of this lemma follows a similar approach to the proof of Theorem 2 in \cite{ribeiro2024regularization}.
It follows from the first-order condition of dual formulation \eqref{mainprob} of the adversarial training problem that  
\begin{equation}\label{equationofupperbound}\bm{0}=\frac{1}{n}\bm{X}^\top(\bm{X}\widehat{\beta}-\bm{Y})+\delta^2 \Vert\widehat{\beta}\Vert_1 w+\frac{\delta}{n}\Vert\bm{X}\widehat{\beta}-\bm{Y}\Vert_1 w+\frac{\delta}{n}\Vert\widehat{\beta}\Vert_1 \bm{X}^\top z,\end{equation}
where \[z_i=\partial \vert  \bm{X}_i^\top\widehat{\beta}-Y\vert, w_i=\partial \vert \widehat{\beta}\vert_i.\]

Then, we take the dot product of both sides of equation \eqref{equationofupperbound} with $\widehat{\beta}-\beta_\ast$ and could have the following:
\begin{equation}\label{inequality}\begin{aligned}
    &\frac{1}{n}\Vert\bm{X}(\widehat{\beta}-\beta_\ast)\Vert_2^2\\
    =&\frac{1}{n}(\bm{X}(\widehat{\beta}-\beta_\ast))^\top \bm{\epsilon}-\delta^2 \Vert\widehat{\beta}\Vert_1 w^\top (\widehat{\beta}-\beta_\ast)-\frac{\delta}{n}\Vert\bm{X}\widehat{\beta}-\bm{Y}\Vert_1w^\top(\widehat{\beta}-\beta_\ast)-\frac{\delta}{n}\Vert\widehat{\beta}\Vert_1 (\bm{X}(\widehat{\beta}-\beta_\ast))^\top z,\\
    =&\frac{1}{n}(\bm{X}(\widehat{\beta}-\beta_\ast))^\top \bm{\epsilon}-\delta ^2\Vert\widehat{\beta}\Vert_1^2+\delta^2 \Vert\widehat{\beta}\Vert_1w^\top \beta_\ast-\frac{\delta}{n}\Vert\bm{X}\widehat{\beta}-\bm{Y}\Vert_1\Vert\widehat{\beta}\Vert_1+\frac{\delta}{n}\Vert\bm{X}\widehat{\beta}-\bm{Y}\Vert_1 w^\top\beta_\ast\\
    &-\frac{\delta}{n}\Vert\widehat{\beta}\Vert_1 \Vert \bm{X}\widehat{\beta}-\bm{Y}\Vert_1-\frac{\delta}{n}\Vert\widehat{\beta}\Vert_1 \bm{\epsilon}^\top z
    \\
    \overset{(a)}{\leq}&\frac{\delta}{2n}\Vert\bm{\epsilon}\Vert_1 \Vert \widehat{\beta}-\beta_\ast\Vert_1-\delta ^2\Vert\widehat{\beta}\Vert_1^2+\delta^2 \Vert\widehat{\beta}\Vert_1\Vert \beta_\ast\Vert_1-\frac{\delta}{n}\Vert\bm{X}\widehat{\beta}-\bm{Y}\Vert_1\Vert\widehat{\beta}\Vert_1+\frac{\delta}{n}\Vert\bm{X}\widehat{\beta}-\bm{Y}\Vert_1 \Vert\beta_\ast\Vert_1\\
 &-\frac{\delta}{n}\Vert\widehat{\beta}\Vert_1 \Vert \bm{X}\widehat{\beta}-\bm{Y}\Vert_1+\frac{\delta}{n}\Vert\widehat{\beta}\Vert_1 \Vert\bm{\epsilon}\Vert_1\\
 \overset{(b)}{\leq}&\frac{\delta}{n}\Vert\widehat{\beta}\Vert_1(\Vert\bm{\epsilon}\Vert_1-2\Vert\bm{X}\widehat{\beta}-\bm{Y}\Vert_1)+\frac{\delta}{n}\Vert \bm{X}\widehat{\beta}-\bm{Y}\Vert_1\Vert\beta_\ast\Vert_1+\frac{\delta}{2n}\Vert\bm{\epsilon}\Vert_1 (\Vert \widehat{\beta}\Vert_1+\Vert\beta_\ast\Vert_1)\\
 &-\delta ^2\Vert\widehat{\beta}\Vert_1^2+\delta^2 \Vert\widehat{\beta}\Vert_1\Vert \beta_\ast\Vert_1\\
 \overset{(c)}{\leq}& \frac{\delta}{n}\Vert\widehat{\beta}\Vert_1(\frac{5}{3}\Vert\bm{X}(\widehat{\beta}-\beta_\ast)\Vert_1-\frac{2}{3}\Vert\bm{\epsilon}\Vert_1)+\frac{\delta}{n}\Vert \bm{X}\widehat{\beta}-\bm{Y}\Vert_1\Vert\beta_\ast\Vert_1+\frac{\delta}{2n}\Vert\bm{\epsilon}\Vert_1 (\Vert \widehat{\beta}\Vert_1+\Vert\beta_\ast\Vert_1)\\
 &-\delta ^2\Vert\widehat{\beta}\Vert_1^2+\delta^2 \Vert\widehat{\beta}\Vert_1\Vert \beta_\ast\Vert_1\\
 \overset{(d)}{\leq}& \frac{\delta}{\sqrt{n}}\Vert\bm{X}(\widehat{\beta}-\beta_\ast)\Vert_2(\frac{5}{3}\Vert\widehat{\beta}\Vert_1+\Vert\beta_\ast\Vert_1)+\frac{\delta}{n}\Vert\bm{\epsilon}\Vert_1 (-\frac{1}{6}\Vert \widehat{\beta}\Vert_1+\frac{3}{2}\Vert\beta_\ast\Vert_1)-\delta ^2\Vert\widehat{\beta}\Vert_1^2+\delta^2 \Vert\widehat{\beta}\Vert_1\Vert \beta_\ast\Vert_1,
    \end{aligned}\end{equation}
where (a) comes from the H\"{o}lder's inequality, i.e.,  \[(\bm{X}(\widehat{\beta}-\beta_\ast))^\top \bm{\epsilon}\leq \Vert\widehat{\beta}-\beta_\ast\Vert_{1} \Vert\bm{X}^\top\bm{\epsilon}\Vert_{\infty},~ w^\top \beta_\ast\leq \Vert w\Vert_\infty\Vert\beta_\ast\Vert_1=\Vert\beta_\ast\Vert_1,~-\bm{\epsilon}^\top z\leq \Vert\bm{\epsilon}\Vert_1\Vert z\Vert_\infty=\Vert\bm{\epsilon}\Vert_1,\] and the condition $2\Vert\bm{X}^\top \bm{\epsilon}\Vert_\infty/\Vert\bm{\epsilon}\Vert_1\leq \delta$,
(b) comes from $\Vert\widehat{\beta}-\beta_\ast\Vert_1\leq\Vert\widehat{\beta}\Vert_1+\Vert\beta_\ast\Vert_1$,
(c) comes from the relationship that \[2\Vert\bm{X}\widehat{\beta} -\bm{Y}\Vert_1\geq\frac{5}{3}\Vert\bm{X}\widehat{\beta} -\bm{Y}\Vert_1\geq  -\frac{5}{3}\Vert\bm{X}(\widehat{\beta}-\beta_\ast)\Vert_1+\frac{5}{3}\Vert\bm{\epsilon}\Vert_1,\]
and (d) comes from the inequality that $\Vert\bm{X}(\widehat{\beta}-\beta_\ast)\Vert_1\leq \sqrt{n}\Vert\bm{X}(\widehat{\beta}-\beta_\ast)\Vert_2$ and $\Vert\bm{X}\widehat{\beta} -\bm{Y}\Vert_1\leq\Vert\bm{X}(\widehat{\beta}-\beta_\ast)\Vert_1+\Vert\bm{\epsilon}\Vert_1$.

One may observe that \eqref{inequality} is a second-order inequality of variable $\Vert\bm{X}(\widehat{\beta}-\beta_\ast)\Vert_2/\sqrt{n}$, then the associate discriminant should be equal to or larger than $0$, resulting in  
    \[\frac{1}{9}\delta^2(-11\Vert\widehat{\beta}\Vert_1^2+66\Vert\beta_\ast\Vert_1\Vert\widehat{\beta}\Vert_1+9\Vert\beta_\ast\Vert_1^2)+\frac{\delta}{3n}\Vert\bm{\epsilon}\Vert_1(-2\Vert\widehat{\beta}\Vert_1+18\Vert\beta_\ast\Vert_1)\geq 0,\]
    from which we could conclude that at least one of two terms should be equal or larger than $0$, implying
    \[ \Vert\widehat{\beta}\Vert_1\leq9\Vert\beta_\ast\Vert_1. \]
\end{proof}
\textbf{Then, we proceed to prove Theorem \ref{mainresult}.}
 \begin{proof}
    We write the objective function in \eqref{mainprob} in the matrix norm and then have that 
    \begin{equation}\label{objectiveinequal}
        \frac{1}{2n}\Vert \bm{Y}-\bm{X}\widehat{\beta}\Vert_2^2+\frac{1}{2}\delta^2 \Vert\widehat{\beta}\Vert_1^2+\frac{\delta}{n}\Vert \bm{Y}-\bm{X}\widehat{\beta}\Vert_1\Vert\widehat{\beta}\Vert_1\leq \frac{1}{2n}\Vert \bm{Y}-\bm{X}\beta_\ast\Vert_2^2+\frac{1}{2}\delta^2 \Vert\beta_\ast\Vert_1^2+\frac{\delta}{n}\Vert \bm{Y}-\bm{X}\beta_\ast\Vert_1\Vert\beta_\ast\Vert_1.
    \end{equation}
    
    It follows from  $\bm{Y}=\bm{X}\beta_\ast+\bm{\epsilon}$, i.e., Assumption \ref{assume2}, that
    \[\Vert \bm{Y}-\bm{X}\beta_\ast\Vert_2^2=\Vert\bm{\epsilon}\Vert_2^2,\]
    \[ \Vert\bm{Y}-\bm{X}\widehat{\beta}\Vert_2^2=\Vert \bm{X}(\widehat{\beta}-\beta_\ast)-\bm{\epsilon}\Vert_2^2=\Vert \bm{X}(\widehat{\beta}-\beta_\ast)\Vert_2^2+\Vert\bm{\epsilon}\Vert_2^2-2\bm{\epsilon}^\top \bm{X}(\widehat{\beta}-\beta_\ast),\]
    \[\Vert \bm{Y}-\bm{X}\widehat{\beta}\Vert_1=\Vert \bm{X}(\widehat{\beta}-\beta_\ast)-\bm{\epsilon}\Vert_1\geq\Vert\bm{\epsilon}\Vert_1- \Vert \bm{X}(\widehat{\beta}-\beta_\ast)\Vert_1.\]
    
 In this way, the inequality \eqref{objectiveinequal} could be reformulated as  
     \begin{equation}\label{proofinequal1}
     \begin{aligned}
        &\frac{1}{2n}\Vert \bm{X}(\widehat{\beta}-\beta_\ast)\Vert_2^2\\
        &\leq \frac{1}{n}\bm{\epsilon}^\top \bm{X}(\widehat{\beta}-\beta_\ast)+\frac{\delta}{n}\Vert \bm{X}(\widehat{\beta}-\beta_\ast)\Vert_1\Vert\widehat{\beta}\Vert_1+\frac{\delta}{n}\Vert \bm{\epsilon}\Vert_1\left(\Vert\beta_\ast\Vert_1-\Vert\widehat{\beta}\Vert_1\right)+\frac{1}{2}\delta^2 \Vert\beta_\ast\Vert_1^2-\frac{1}{2}\delta^2 \Vert\widehat{\beta}\Vert_1^2\\
        &\overset{(a)}{\leq} \frac{1}{n}\Vert\bm{X}^\top\bm{\epsilon}\Vert_\infty\Vert\widehat{\beta}-\beta_\ast\Vert_1+\frac{\delta}{n}\Vert \bm{X}(\widehat{\beta}-\beta_\ast)\Vert_1\Vert\widehat{\beta}\Vert_1+\frac{\delta}{n}\Vert \bm{\epsilon}\Vert_1\left(\Vert\beta_\ast\Vert_1-\Vert\widehat{\beta}\Vert_1\right)+\frac{1}{2}\delta^2 \Vert\beta_\ast\Vert_1^2-\frac{1}{2}\delta^2 \Vert\widehat{\beta}\Vert_1^2\\
        &\overset{(b)}{\leq}\frac{\delta}{2n}\Vert\bm{\epsilon}\Vert_1\Vert\widehat{\beta}-\beta_\ast\Vert_1+\frac{\delta}{n}\Vert \bm{X}(\widehat{\beta}-\beta_\ast)\Vert_1\Vert\widehat{\beta}\Vert_1+\frac{\delta}{n}\Vert \bm{\epsilon}\Vert_1\left(\Vert\beta_\ast\Vert_1-\Vert\widehat{\beta}\Vert_1\right)+\frac{1}{2}\delta^2 \Vert\beta_\ast\Vert_1^2-\frac{1}{2}\delta^2 \Vert\widehat{\beta}\Vert_1^2\\
        &\overset{(c)}{\leq} \frac{\delta}{2n}\Vert\bm{\epsilon}\Vert_1\Vert\widehat{\beta}-\beta_\ast\Vert_1+\frac{\delta}{\sqrt{n}}\Vert \bm{X}(\widehat{\beta}-\beta_\ast)\Vert_2\Vert\widehat{\beta}\Vert_1+\frac{\delta}{n}\Vert \bm{\epsilon}\Vert_1\left(\Vert\beta_\ast\Vert_1-\Vert\widehat{\beta}\Vert_1\right)+\frac{1}{2}\delta^2 \Vert\beta_\ast\Vert_1^2-\frac{1}{2}\delta^2 \Vert\widehat{\beta}\Vert_1^2,
    \end{aligned}
    \end{equation}  
where (a) comes from the H\"{o}lder's inequality, 
(b) comes from the condition $2\Vert\bm{X}^\top \bm{\epsilon}\Vert_\infty/\Vert\bm{\epsilon}\Vert_1\leq \delta$,
(c) comes from $\Vert\bm{X}(\widehat{\beta}-\beta_\ast)\Vert_1\leq \sqrt{n}\Vert\bm{X}(\widehat{\beta}-\beta_\ast)\Vert_2$.

Then, we begin to give the upper bound of the prediction error. Two cases should be discussed.

\textbf{First Case:}  \begin{equation}\label{firstcase} \Vert\widehat{\beta}-\beta_\ast\Vert_1+2\Vert\beta_\ast\Vert_1 -2\Vert\widehat{\beta}\Vert_1\leq  0.\end{equation}

In this case, we have that
\begin{equation}\label{toomany}\Vert\beta_\ast\Vert_1 -\Vert\widehat{\beta}\Vert_1 =\Vert\widehat{\beta}\Vert_1-\Vert\beta_\ast\Vert_1+2\Vert\beta_\ast\Vert_1 -2\Vert\widehat{\beta}\Vert_1\leq \Vert\widehat{\beta}-\beta_\ast\Vert_1+2\Vert\beta_\ast\Vert_1 -2\Vert\widehat{\beta}\Vert_1\leq  0.\end{equation}

Then, it follows from \eqref{proofinequal1} that 
     \begin{equation*}
     \begin{aligned}
        &\frac{1}{2n}\Vert \bm{X}(\widehat{\beta}-\beta_\ast)\Vert_2^2\\
        &\leq  \frac{\delta}{2n}\Vert\bm{\epsilon}\Vert_1\Vert\widehat{\beta}-\beta_\ast\Vert_1+\frac{\delta}{\sqrt{n}}\Vert \bm{X}(\widehat{\beta}-\beta_\ast)\Vert_2\Vert\widehat{\beta}\Vert_1+\frac{\delta}{n}\Vert \bm{\epsilon}\Vert_1\left(\Vert\beta_\ast\Vert_1-\Vert\widehat{\beta}\Vert_1\right)+\frac{1}{2}\delta^2 \Vert\beta_\ast\Vert_1^2-\frac{1}{2}\delta^2 \Vert\widehat{\beta}\Vert_1^2\\
        &=\frac{\delta}{2n}\Vert\bm{\epsilon}\Vert_1 \left(\Vert\widehat{\beta}-\beta_\ast\Vert_1+2\Vert\beta_\ast\Vert_1 -2\Vert\widehat{\beta}\Vert_1 \right)+\frac{\delta}{\sqrt{n}}\Vert \bm{X}(\widehat{\beta}-\beta_\ast)\Vert_2\Vert\widehat{\beta}\Vert_1+\frac{1}{2}\delta^2 \Vert\beta_\ast\Vert_1^2-\frac{1}{2}\delta^2 \Vert\widehat{\beta}\Vert_1^2\\
        &\leq \frac{\delta}{\sqrt{n}}\Vert \bm{X}(\widehat{\beta}-\beta_\ast)\Vert_2\Vert\widehat{\beta}\Vert_1,
    \end{aligned}
    \end{equation*}  
    where the last inequality comes from \eqref{firstcase} and \eqref{toomany}.
    
Lemma \ref{lemmaupperbound} indicates that
     \begin{equation*}
     \begin{aligned}
        &\frac{1}{\sqrt{2n}}\Vert \bm{X}(\widehat{\beta}-\beta_\ast)\Vert_2
        \leq  \sqrt{2}\delta \Vert\widehat{\beta}\Vert_1\leq 9\sqrt{2}\delta \Vert\beta_\ast\Vert_1,
    \end{aligned}
    \end{equation*} 
which is equivalent to 
     \begin{equation}\label{case1result}
     \begin{aligned}
        &\frac{1}{2n}\Vert \bm{X}(\widehat{\beta}-\beta_\ast)\Vert_2^2
        \leq162\delta^2 \Vert\beta_\ast\Vert_1^2\leq 162\delta^2 \vert S\vert \Vert\beta_\ast\Vert_2^2\leq 162\delta^2 s\Vert\beta_\ast\Vert_2^2 .
    \end{aligned}
    \end{equation} 

\textbf{Second Case:}  \begin{equation}\label{secondcase} \Vert\widehat{\beta}-\beta_\ast\Vert_1+2\Vert\beta_\ast\Vert_1 -2\Vert\widehat{\beta}\Vert_1\geq 0.\end{equation}

If we let $\widehat{v}=\widehat{\beta}-\beta_\ast$, then we have that
\[ \Vert\beta_\ast-\widehat{\beta}\Vert_1= \Vert \widehat{v}\Vert_1= \Vert \widehat{v}_S\Vert_1+\Vert \widehat{v}_{S^c}\Vert_1,\]
\[\quad \Vert\beta_\ast\Vert_1-\Vert \widehat{\beta}\Vert_1=\Vert {\beta_\ast}_S\Vert_1-\left(\Vert {\beta_\ast}_S+\widehat{v}_S\Vert_1+\Vert \widehat{v}_{S^c}\Vert_1\right)\leq \Vert \widehat{v}_S\Vert_1-\Vert \widehat{v}_{S^c}\Vert_1.\]

The inequality \eqref{secondcase} indicates that
\begin{equation}\label{idonthavename}3\Vert \widehat{v}_S\Vert_1-\Vert \widehat{v}_{S^c}\Vert_1\geq \Vert\widehat{\beta}-\beta_\ast\Vert_1+2\Vert\beta_\ast\Vert_1 -2\Vert\widehat{\beta}\Vert_1\geq 0,\end{equation}
implying
\[\Vert \widehat{v}_{S^c}\Vert_1\leq  3\Vert \widehat{v}_S\Vert_1.\]
In this way, the $\RE(s,3)$ condition can be applied.
    
In addition, it follows the inequality \eqref{proofinequal1} that  
     \begin{equation*}
     \begin{aligned}
        &\frac{1}{2n}\Vert \bm{X}(\widehat{\beta}-\beta_\ast)\Vert_2^2\\
        &\leq  \frac{\delta}{2n}\Vert\bm{\epsilon}\Vert_1\Vert\widehat{\beta}-\beta_\ast\Vert_1+\frac{\delta}{\sqrt{n}}\Vert \bm{X}(\widehat{\beta}-\beta_\ast)\Vert_2\Vert\widehat{\beta}\Vert_1+\frac{\delta}{n}\Vert \bm{\epsilon}\Vert_1\left(\Vert\beta_\ast\Vert_1-\Vert\widehat{\beta}\Vert_1\right)+\frac{1}{2}\delta^2 \Vert\beta_\ast\Vert_1^2-\frac{1}{2}\delta^2 \Vert\widehat{\beta}\Vert_1^2\\
        &\overset{(a)}{\leq} \frac{\delta}{2n}\Vert\bm{\epsilon}\Vert_1\Vert\widehat{\beta}-\beta_\ast\Vert_1+
        \frac{1}{4n}\Vert \bm{X}(\widehat{\beta}-\beta_\ast)\Vert_2^2 +\delta^2\Vert\widehat{\beta}\Vert_1^2+
        \frac{\delta}{n}\Vert \bm{\epsilon}\Vert_1\left(\Vert\beta_\ast\Vert_1-\Vert\widehat{\beta}\Vert_1\right)+\frac{1}{2}\delta^2 \Vert\beta_\ast\Vert_1^2-\frac{1}{2}\delta^2 \Vert\widehat{\beta}\Vert_1^2\\
        &=\frac{\delta}{2n}\Vert\bm{\epsilon}\Vert_1 \left(\Vert\widehat{\beta}-\beta_\ast\Vert_1+2\Vert\beta_\ast\Vert_1-2\Vert\widehat{\beta}\Vert_1 \right)+ \frac{1}{4n}\Vert \bm{X}(\widehat{\beta}-\beta_\ast)\Vert_2^2 +\frac{1}{2}\delta^2 \Vert\beta_\ast\Vert_1^2+\frac{1}{2}\delta^2 \Vert\widehat{\beta}\Vert_1^2
        \\
         &\overset{(b)}{\leq} \frac{3\delta}{2n}\Vert\bm{\epsilon}\Vert_1\Vert\widehat{v}_S\Vert_1+
        \frac{1}{4n}\Vert \bm{X}(\widehat{\beta}-\beta_\ast)\Vert_2^2 +41\delta^2\Vert\beta_\ast\Vert_1^2,
    \end{aligned}
    \end{equation*} 
where (a) comes from the inequality \[\frac{1}{4n}\Vert \bm{X}(\widehat{\beta}-\beta_\ast)\Vert_2^2 +\delta^2\Vert\widehat{\beta}\Vert_1^2\geq \frac{\delta}{\sqrt{n}}\Vert \bm{X}(\widehat{\beta}-\beta_\ast)\Vert_2\Vert\widehat{\beta}\Vert_1,\](b) comes from \eqref{idonthavename} and Lemma \ref{lemmaupperbound}.

Further, we have that
         \begin{equation}
     \begin{aligned}\label{solveinequality}
        \frac{1}{4n}\Vert \bm{X}(\widehat{\beta}-\beta_\ast)\Vert_2^2&\leq \frac{3\delta}{2n}\sqrt{\vert S\vert}\Vert\bm{\epsilon}\Vert_1\Vert\widehat{v}\Vert_2 +41\delta^2\vert S\vert\Vert\beta_\ast\Vert_2^2\\
        &\leq  \frac{3\delta}{2\sqrt{n}}\frac{\sqrt{s}}{\gamma(s,3)}\frac{\Vert\bm{\epsilon}\Vert_1}{n} \Vert \bm{X}(\widehat{\beta}-\beta_\ast)\Vert_2+41\delta^2 s \Vert\beta_\ast\Vert_2^2,
    \end{aligned}
    \end{equation} 
    where the first inequality comes from $\Vert\widehat{v}_S\Vert_1\leq \sqrt{\vert S\vert}\Vert\widehat{v}\Vert_2$ and $\Vert \beta_\ast\Vert_1\leq \sqrt{\vert S\vert}\Vert \beta_\ast\Vert_2$, and the last inequality comes from the $\RE(s,3)$ condition and $\vert S\vert\leq s$.
    
Then, if we solve the inequality \eqref{solveinequality}, we could have that \[\frac{1}{\sqrt{n}}\Vert \bm{X}(\widehat{\beta}-\beta_\ast)\Vert_2\leq \frac{\delta \sqrt{s}}{\gamma(s,3)}\left(3\frac{\Vert\bm{\epsilon}\Vert_1}{n}+\sqrt{9\left(\frac{\Vert\bm{\epsilon}\Vert_1}{n}\right)^2+164\gamma^2(s,3)\Vert\beta_\ast\Vert_2^2} \right),\]
which is equivalent to 
 \[\frac{1}{\sqrt{n}}\Vert \bm{X}(\widehat{\beta}-\beta_\ast)\Vert_2\leq  (1+\sqrt{2}) \frac{\delta \sqrt{s}}{\gamma(s,3)}\max\left\{3\frac{\Vert\bm{\epsilon}\Vert_1}{n}, \sqrt{164}\gamma(s,3)\Vert\beta_\ast\Vert_2\right\},\]
indicating that
\begin{equation}\label{case2result}\frac{1}{2n}\Vert \bm{X}(\widehat{\beta}-\beta_\ast)\Vert_2^2 \leq 3\frac{\delta^2 s}{\gamma^2(s,3)}  \max\left\{   9\left(\frac{\Vert\bm{\epsilon}\Vert_1}{n}\right)^2,164\gamma^2(s,3)\Vert\beta_\ast\Vert_2^2\right\}.\end{equation}

Combining \eqref{case1result} and \eqref{case2result}, we have that 
\[\frac{1}{2n}\Vert \bm{X}(\widehat{\beta}-\beta_\ast)\Vert_2^2\leq 3\delta^2 s \max\left\{   \frac{9}{\gamma^2(s,3)}\left(\frac{\Vert\bm{\epsilon}\Vert_1}{n}\right)^2, 164\Vert\beta_\ast\Vert_2^2\right\}. \]
\end{proof}

\section{Proof of Corollary \ref{highprob}}
\begin{proof}
    To give the high probability result, we should analyze the bound of the term $ \Vert \bm{X}^\top\bm{\epsilon}\Vert_\infty/n$ and $\Vert\bm{\epsilon}\Vert_1/n$. The associated arguments are discussed in the following two parts, respectively.

\textbf{Part I}: We focus on the tail bound of $ \Vert \bm{X}^\top\bm{\epsilon}\Vert_\infty/n$.
    Since the design matrix $\bm{X}$ is normalized, the random variable $\bm{x}_j^\top \bm{\epsilon}/n$ is stochastically dominated by $\mathcal{N}(0,\sigma^2/n)$. As shown in \cite{hastie2015statistical}, it follows from the Gaussian tail bound and the union bound that      
 \[\mathbb{P}\left( \frac{\Vert\bm{X}^{\top}\bm{\epsilon}\Vert_\infty}{n}\geq t\right)\leq 2\exp \left(-\frac{nt^2}{2\sigma^2}+\log p\right).\]
 In this way, with a probability greater than $1-2/p$, the following holds
  \[\frac{\Vert\bm{X}^{\top}\bm{\epsilon}\Vert_\infty}{n}\leq 2\sigma \sqrt{\frac{\log p}{n}}.\]

\textbf{Part II}: We focus on the concentration inequality of $ \Vert \bm{\epsilon}\Vert_1/n$.
It follows from general Hoeffding's inequality \citep{vershynin2018high} that
\[\mathbb{P}\left( \left\vert \Vert\bm{\epsilon}\Vert_1-n\sigma\sqrt{\frac{2}{\pi}} \right\vert \geq t\right)\leq 2\exp\left( -C_2\frac{t^2}{n\sigma^2}\right),\]
indicating
\[\mathbb{P}\left( \left\vert\frac{1}{n}\Vert\bm{\epsilon}\Vert_1-\sigma\sqrt{\frac{2}{\pi}} \right\vert \geq t\right)\leq 2\exp\left( -C_2\frac{nt^2}{\sigma^2}\right),\]
where $C_2$ is some positive constant.
By choosing $t=\sigma/10$, we have that 
\[\mathbb{P}\left( \sigma\left(\sqrt{\frac{2}{\pi}} -\frac{1}{10}\right)\leq \frac{1}{n}\Vert\bm{\epsilon}\Vert_1\right)\geq 1-2\exp\left( -C_1n\right),\]
where $C_1$ is some positive constant.
This is to say, with a probability greater than $1-2\exp\left( -C_1n\right)$, we have that  
\[ \sigma\left(\sqrt{\frac{2}{\pi}} -\frac{1}{10}\right)\leq \frac{1}{n}\Vert\bm{\epsilon}\Vert_1.\]

Suppose we let 
\[\delta=\frac{4}{\sqrt{\frac{2}{\pi}}-\frac{1}{10}}\sqrt{\frac{\log p}{n}},\]
with a probability greater than $1-2\exp\left( -C_1n\right)-2/p$, we have that  \[\frac{2\Vert\bm{X}^\top \bm{\epsilon}\Vert_\infty}{\Vert\bm{\epsilon}\Vert_1}\leq \delta.\]

It follows from Theorem \ref{mainresult} that 
    \[\frac{1}{2n^2}\Vert \bm{X}(\widehat{\beta}-\beta_\ast)\Vert_2^2\leq  192\frac{s\log p}{n} \max\left\{   \frac{9}{\gamma^2(s,3)}\left(\frac{\Vert\bm{\epsilon}\Vert_1}{n}\right)^2,164\Vert\beta_\ast\Vert_2^2\right\}\]
holds with a probability greater $1-2\exp\left( -C_1n\right)-2/p$.
\end{proof}
\section{Proof of Corollary \ref{highprobsim}}
\begin{proof}
    We focus on the tail bound of  $ \Vert\bm{\epsilon}\Vert_1/n$.
We apply the Chernoff bound to give the tail bound of $\Vert\bm{\epsilon}\Vert_1$, and we have that 
\[ \mathbb{P}\left(\Vert\bm{\epsilon}\Vert_1\geq t\right)\leq \inf_{s>0}M(s)\exp(-ts),\]
where $M(s)$ is the moment-generating function of $\Vert\bm{\epsilon}\Vert_1$.
We also could obtain that
\[M_i(s)=\mathbb{E}[\exp(s\vert \bm{\epsilon}_i\vert)]\leq 2\mathbb{E}[\exp(s\bm{\epsilon}_i)]=2\exp\left( \frac{\sigma^2s^2}{2}\right),\]
indicating
\[M(s)\leq2^n\exp\left( \frac{n\sigma^2s^2}{2}\right).\]
Then, we have that 
\[\mathbb{P}\left(\Vert\bm{\epsilon}\Vert_1 \geq t\right)\leq\inf_{s>0}2^n\exp\left( \frac{n\sigma^2s^2}{2}-ts\right), \]
indicating
\[\mathbb{P}\left(\frac{1}{n}\Vert\bm{\epsilon}\Vert_1 \geq t\right)\leq \exp \left(-\frac{nt^2}{2\sigma^2}+n\log 2\right).\]

 In this way,  with a probability greater $1- \exp(-n)$, the following holds:
\[\frac{1}{n}\Vert\bm{\epsilon}\Vert_1 \leq \sqrt{2\log 2+2}\sigma\leq 2\sigma.\]
Since we have that $\frac{1}{6}\gamma(s,3)R\geq \sigma$,
\[\frac{9}{\gamma^2(s,3)}\left(\frac{\Vert\bm{\epsilon}\Vert_1}{n}\right)^2\leq R^2,\]
holds with a probability greater $1- \exp(-n)$. Due to Corollary \ref{highprob}, we have the following 
    \[\frac{1}{2n^2}\Vert \bm{X}(\widehat{\beta}-\beta_\ast)\Vert_2^2\leq  192\frac{s\log p}{n} R^2\]
    holds with a probability greater $1-2/p- 2\exp(-C_1n)-\exp(-n)$.
\end{proof}
\section{Proof of Proposition \ref{dualformulationofgroupat}}
It follows from  Proposition 1 in \cite{ribeiro2024regularization} that 
    \[\widetilde{R}(\delta)=\min_{\beta}\frac{1}{n}\sum_{i=1}^n\left(\vert(\bm{X}_i+\Delta)^\top\beta-Y_i\vert +\delta\Vert\beta_{\bm{\omega}^{-1}}\Vert_{\ast}\right)^2,\]
where $\Vert \cdot\Vert_\ast$ denotes the dual norm of $(2,\infty)$-norm of $\beta_{\bm{\omega}}$. Due to Proposition \ref{weightednorm}, we conclude that \eqref{groupat} holds.
\section{Proof of Theorem \ref{mainresult2}}
We first give the upper bound of the $\Vert\widetilde{\beta}_{\bm{\omega}^{-1}}\Vert_{2,1}$ in terms of $\Vert{\beta_\ast}_{\bm{\omega}^{-1}}\Vert_{2,1}$.
\begin{lemma}[Upper Bound of $\Vert\widetilde{\beta}_{\bm{\omega}^{-1}}\Vert_{2,1}$]\label{upperboundofweighted}Under conditions stated in Theorem \ref{mainresult2}, we have that 
    \[\Vert\widetilde{\beta}_{\bm{\omega}^{-1}}\Vert_{2,1}\leq9\Vert{\beta_\ast}_{\bm{\omega}^{-1}}\Vert_{2,1}.\]
\end{lemma}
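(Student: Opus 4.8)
The plan is to mirror, step for step, the proof of Lemma~\ref{lemmaupperbound}, replacing the $\ell_1$-norm everywhere by the weighted $(2,1)$-norm $\Vert\cdot_{\bm{\omega}^{-1}}\Vert_{2,1}$ and the $\ell_1$-subgradient by the corresponding group subgradient. First I would write down the first-order optimality condition for the dual group objective~\eqref{groupat}. Setting $N=\Vert\widetilde{\beta}_{\bm{\omega}^{-1}}\Vert_{2,1}$, the stationarity condition reads
\[\bm{0}=\frac{1}{n}\bm{X}^\top(\bm{X}\widetilde{\beta}-\bm{Y})+\delta^2 N w+\frac{\delta}{n}\Vert\bm{X}\widetilde{\beta}-\bm{Y}\Vert_1 w+\frac{\delta}{n}N\,\bm{X}^\top z,\]
where $z$ is a subgradient of $\beta\mapsto\sum_i|\bm{X}_i^\top\beta-Y_i|$ (so $\Vert z\Vert_\infty\le1$) and $w$ is a subgradient of $\beta\mapsto\Vert\beta_{\bm{\omega}^{-1}}\Vert_{2,1}$. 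The one fact I need about $w$ is that its block structure is dual to the weights: $w^\top\widetilde{\beta}=N$ and $\Vert w^l\Vert_2\le1/\omega^l$ for each $l$, i.e.\ $\Vert w_{\bm{\omega}}\Vert_{2,\infty}\le1$. This is exactly the $(2,1)$--$(2,\infty)$ duality of Proposition~\ref{weightednorm}, and it plays the role that $\Vert w\Vert_\infty\le1$ played in the scalar proof.

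Next I would take the dot product of this identity with $\widetilde{v}=\widetilde{\beta}-\beta_\ast$, reproducing the chain~\eqref{inequality}. Two families of estimates are needed. The regularizer terms are handled by the duality above: $w^\top\beta_\ast\le\Vert w_{\bm{\omega}}\Vert_{2,\infty}\Vert{\beta_\ast}_{\bm{\omega}^{-1}}\Vert_{2,1}=\Vert{\beta_\ast}_{\bm{\omega}^{-1}}\Vert_{2,1}$ and $-\bm{\epsilon}^\top z\le\Vert\bm{\epsilon}\Vert_1$, verbatim analogs of step~(a). The cross term is handled group-wise by Cauchy--Schwarz, $(\bm{X}\widetilde{v})^\top\bm{\epsilon}=\sum_l(\widetilde{v}^l)^\top(\bm{X}^\top\bm{\epsilon})^l\le\sum_l\Vert\widetilde{v}^l\Vert_2\Vert(\bm{X}^\top\bm{\epsilon})^l\Vert_2$, and the group condition $2\Vert(\bm{X}^\top\bm{\epsilon})^l\Vert_2/\Vert\bm{\epsilon}\Vert_1\le\delta/\omega^l$ collapses this into $\tfrac{\delta}{2}\Vert\bm{\epsilon}\Vert_1\Vert\widetilde{v}_{\bm{\omega}^{-1}}\Vert_{2,1}$. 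This is precisely the role played by $2\Vert\bm{X}^\top\bm{\epsilon}\Vert_\infty/\Vert\bm{\epsilon}\Vert_1\le\delta$ together with Hölder's inequality in the original argument.

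All remaining manipulations involve only the loss part $\Vert\bm{X}\widetilde{\beta}-\bm{Y}\Vert_1$ and not the regularizer, so they transfer without change: the triangle inequalities relating $\Vert\bm{X}\widetilde{\beta}-\bm{Y}\Vert_1$ to $\Vert\bm{X}\widetilde{v}\Vert_1$ and $\Vert\bm{\epsilon}\Vert_1$, followed by $\Vert\bm{X}\widetilde{v}\Vert_1\le\sqrt{n}\Vert\bm{X}\widetilde{v}\Vert_2$, yield a quadratic inequality in the nonnegative quantity $\Vert\bm{X}\widetilde{v}\Vert_2/\sqrt{n}$ whose coefficients are the group analogs of those in~\eqref{inequality}, now with $N$ and $\Vert{\beta_\ast}_{\bm{\omega}^{-1}}\Vert_{2,1}$ in place of $\Vert\widehat{\beta}\Vert_1$ and $\Vert\beta_\ast\Vert_1$. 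Since this quadratic admits a real root, its discriminant is nonnegative, which (exactly as in the scalar endgame) forces either $-2N+18\Vert{\beta_\ast}_{\bm{\omega}^{-1}}\Vert_{2,1}\ge0$ or $-11N^2+66\Vert{\beta_\ast}_{\bm{\omega}^{-1}}\Vert_{2,1}N+9\Vert{\beta_\ast}_{\bm{\omega}^{-1}}\Vert_{2,1}^2\ge0$; either one gives $N\le9\Vert{\beta_\ast}_{\bm{\omega}^{-1}}\Vert_{2,1}$.

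The part requiring the most care — though it is conceptually routine — is setting up the correct dictionary between the scalar and group norms: confirming that the group subgradient $w$ obeys $\Vert w_{\bm{\omega}}\Vert_{2,\infty}\le1$, and that the group-wise bound on $\Vert(\bm{X}^\top\bm{\epsilon})^l\Vert_2$ collapses the cross term into a multiple of $\Vert\widetilde{v}_{\bm{\omega}^{-1}}\Vert_{2,1}$ rather than $\Vert\widetilde{v}\Vert_1$. Once that correspondence is pinned down via Proposition~\ref{weightednorm}, every inequality in the proof of Lemma~\ref{lemmaupperbound} has an immediate block counterpart and the final discriminant computation is numerically identical.
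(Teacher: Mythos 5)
Your proposal matches the paper's own proof essentially step for step: the same first-order condition for the dual objective \eqref{groupat}, the same dot product with $\widetilde{\beta}-\beta_\ast$, the same duality facts $w^\top\widetilde{\beta}=\Vert\widetilde{\beta}_{\bm{\omega}^{-1}}\Vert_{2,1}$ and $w^\top\beta_\ast\leq\Vert{\beta_\ast}_{\bm{\omega}^{-1}}\Vert_{2,1}$, the same group-wise Cauchy--Schwarz treatment of the cross term under the condition $2\Vert(\bm{X}^\top\bm{\epsilon})^l\Vert_2/\Vert\bm{\epsilon}\Vert_1\leq\delta/\omega^l$, and the identical quadratic-discriminant endgame with coefficients $-11,66,9$ and $-2,18$. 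The dictionary you set up between the $\ell_1$ case and the weighted $(2,1)$ case is exactly what the paper uses, so the argument is correct as proposed.
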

\begin{proof}
The proof of this lemma follows a similar approach to the proof of Lemma \ref{lemmaupperbound}.
It follows from the first-order condition of the dual formulation \eqref{groupat} of the group adversarial training problem that   
\begin{equation}\label{another}\bm{0}=\frac{1}{n}\bm{X}^\top(\bm{X}\widetilde{\beta}-\bm{Y})+\delta^2 \Vert\widetilde{\beta}_{\bm{\omega}^{-1}}\Vert_{2,1} t+\frac{\delta}{n}\Vert\bm{X}\widetilde{\beta}-\bm{Y}\Vert_1 t+\frac{\delta}{n}\Vert\widetilde{\beta}_{\bm{\omega}^{-1}}\Vert_{2,1} \bm{X}^\top z,\end{equation}
where \[  z_i=\partial \vert  X_i^\top\widetilde{\beta}-Y\vert,\]
\[t=\partial \Vert\widetilde{\beta}_{\bm{\omega}^{-1}}\Vert_{2,1},\quad t^l=\frac{1}{\omega^l}\frac{\widetilde{\beta}^l}{\Vert \widetilde{\beta}^l\Vert_2}.\]

Notice we have that 
\begin{equation}\label{ineedthis1}t^\top \widetilde{\beta}= \sum_{l=1}^L\frac{1}{\omega_l}\frac{ (\widetilde{\beta}^l)^\top\widetilde{\beta}^l }{\Vert \widetilde{\beta}^l\Vert_2}=\sum_{l=1}^L\frac{1}{\omega_l}\Vert \widetilde{\beta}^l\Vert_2= \Vert\widetilde{\beta}_{\bm{\omega}^{-1}}\Vert_{2,1},\end{equation}
and it follows from the H\"{o}lder's inequality that
\begin{equation}\label{ineedthis2}t^\top \beta_\ast= \sum_{l=1}^L\frac{1}{\omega_l}\frac{ (\widetilde{\beta}^l)^\top \beta_\ast^l }{\Vert \widetilde{\beta}^l\Vert_2}\leq\sum_{l=1}^L\frac{1}{\omega_l}\Vert\beta_\ast^l\Vert_2= \Vert{\beta_\ast}_{\bm{\omega}^{-1}}\Vert_{2,1}.\end{equation}

Also, we have the following from the H\"{o}lder's inequality:
\begin{equation}\label{ineedthis3}(\bm{X}(\widetilde{\beta}-\beta_\ast))^\top\bm{\epsilon}\leq \sum_{l=1}^L\Vert(\bm{X}^\top\bm{\epsilon})^l\Vert_2 \Vert\widetilde{\beta}^l-\beta_\ast^l\Vert_2\leq\frac{1}{2}\delta\Vert\bm{\epsilon}\Vert_1 \sum_{l=1}^L\frac{1}{\omega_l}\Vert\widetilde{\beta}^l-\beta_\ast^l\Vert_2=\frac{1}{2}\delta\Vert\bm{\epsilon}\Vert_1 \Vert(\widetilde{\beta}^l-\beta_\ast^l)_{\bm{\omega}^{-1}}\Vert_{2,1},\end{equation}
where the second inequality comes from the condition $2\Vert\left(\bm{X}^\top \bm{\epsilon}\right)^l\Vert_2/\Vert\bm{\epsilon}\Vert_1\leq \delta/\omega^l$.

Then, we take the dot product of both sides of equation \eqref{another} with $\widetilde{\beta}-\beta_\ast$ and could have the following:
\begin{equation}\begin{aligned}\label{expansionofgroupat}
    &\frac{1}{n}\Vert\bm{X}(\widetilde{\beta}-\beta_\ast)\Vert_2^2\\
    =&\frac{1}{n}(\bm{X}(\widetilde{\beta}-\beta_\ast))^\top \bm{\epsilon}-\delta^2 \Vert\widetilde{\beta}_{\bm{\omega}^{-1}}\Vert_{2,1} t^\top (\widetilde{\beta}-\beta_\ast)-\frac{\delta}{n}\Vert\bm{X}\widetilde{\beta}-\bm{Y}\Vert_1t^\top(\widetilde{\beta}-\beta_\ast)\\
    &-\frac{\delta}{n}\Vert\widetilde{\beta}_{\bm{\omega}^{-1}}\Vert_{2,1} (\bm{X}(\widetilde{\beta}-\beta_\ast))^\top z,\\
    \overset{(a)}{=}&\frac{1}{n}(\bm{X}(\widetilde{\beta}-\beta_\ast))^\top \bm{\epsilon}-\delta ^2\Vert\widetilde{\beta}_{\bm{\omega}^{-1}}\Vert_{2,1}^2+\delta^2\Vert\widetilde{\beta}_{\bm{\omega}^{-1}}\Vert_{2,1}t^\top \beta_\ast-\frac{\delta}{n}\Vert\bm{X}\widetilde{\beta}-\bm{Y}\Vert\Vert\widetilde{\beta}_{\bm{\omega}^{-1}}\Vert_{2,1}\\
    &+\frac{\delta}{n}\Vert\bm{X}\widetilde{\beta}-\bm{Y}\Vert_1 t^\top\beta_\ast-\frac{\delta}{n}\Vert\widetilde{\beta}_{\bm{\omega}^{-1}}\Vert_{2,1} \Vert \bm{X}\widetilde{\beta}-\bm{Y}\Vert_1+\frac{\delta}{n}\Vert\widetilde{\beta}_{\bm{\omega}^{-1}}\Vert_{2,1} \bm{\epsilon}^\top z
    \\
    \overset{(b)}{\leq}&\frac{\delta}{2n}\Vert\bm{\epsilon}\Vert_1 \Vert(\widetilde{\beta}^l-\beta_\ast^l)_{\bm{\omega}^{-1}}\Vert_{2,1}-\delta ^2\Vert\widetilde{\beta}_{\bm{\omega}^{-1}}\Vert_{2,1}^2+\delta^2 \Vert\widetilde{\beta}_{\bm{\omega}^{-1}}\Vert_{2,1}\Vert{\beta_\ast}_{\bm{\omega}^{-1}}\Vert_{2,1}-\frac{\delta}{n}\Vert\bm{X}\widetilde{\beta}-\bm{Y}\Vert_1\Vert\widetilde{\beta}_{\bm{\omega}^{-1}}\Vert_{2,1}\\&+\frac{\delta}{n}\Vert\bm{X}\widetilde{\beta}-\bm{Y}\Vert_1\Vert{\beta_\ast}_{\bm{\omega}^{-1}}\Vert_{2,1}
 -\frac{\delta}{n}\Vert\widetilde{\beta}_{\bm{\omega}^{-1}}\Vert_{2,1} \Vert \bm{X}\widetilde{\beta}-\bm{Y}\Vert_1+\frac{\delta}{n}\Vert\widetilde{\beta}_{\bm{\omega}^{-1}}\Vert_{2,1} \bm{\epsilon}^\top z\\
 \overset{(c)}{\leq}&\frac{\delta}{n}\Vert\widetilde{\beta}_{\bm{\omega}^{-1}}\Vert_{2,1}(\Vert\bm{\epsilon}\Vert_1-2\Vert\bm{X}\widetilde{\beta}-\bm{Y}\Vert_1)+\frac{\delta}{n}\Vert \bm{X}\widetilde{\beta}-\bm{Y}\Vert_1\Vert{\beta_\ast}_{\bm{\omega}^{-1}}\Vert_{2,1}\\
 &+\frac{\delta}{2n}\Vert\bm{\epsilon}\Vert_1 (\Vert\widetilde{\beta}_{\bm{\omega}^{-1}}\Vert_{2,1}+\Vert{\beta_\ast}_{\bm{\omega}^{-1}}\Vert_{2,1})-\delta ^2\Vert\widetilde{\beta}_{\bm{\omega}^{-1}}\Vert_{2,1}^2+\delta^2 \Vert\widetilde{\beta}_{\bm{\omega}^{-1}}\Vert_{2,1}\Vert{\beta_\ast}_{\bm{\omega}^{-1}}\Vert_{2,1}\\
 \overset{(d)}{\leq}& \frac{\delta}{n}\Vert\widetilde{\beta}_{\bm{\omega}^{-1}}\Vert_{2,1}(\frac{5}{3}\Vert\bm{X}(\widetilde{\beta}-\beta_\ast)\Vert_1-\frac{2}{3}\Vert\bm{\epsilon}\Vert_1)+\frac{\delta}{n}\Vert \bm{X}\widetilde{\beta}-\bm{Y}\Vert_1\Vert{\beta_\ast}_{\bm{\omega}^{-1}}\Vert_{2,1}\\
 &+\frac{\delta}{2n}\Vert\bm{\epsilon}\Vert_1 (\Vert\widetilde{\beta}_{\bm{\omega}^{-1}}\Vert_{2,1}+\Vert{\beta_\ast}_{\bm{\omega}^{-1}}\Vert_{2,1})-\delta ^2\Vert\widetilde{\beta}_{\bm{\omega}^{-1}}\Vert_{2,1}^2+\delta^2 \Vert\widetilde{\beta}_{\bm{\omega}^{-1}}\Vert_{2,1}\Vert{\beta_\ast}_{\bm{\omega}^{-1}}\Vert_{2,1}\\
 \overset{(e)}{\leq}& \frac{\delta}{\sqrt{n}}\Vert\bm{X}(\widetilde{\beta}-\beta_\ast)\Vert_2(\frac{5}{3}\Vert\widetilde{\beta}_{\bm{\omega}^{-1}}\Vert_{2,1}+\Vert{\beta_\ast}_{\bm{\omega}^{-1}}\Vert_{2,1})+\frac{\delta}{n}\Vert\bm{\epsilon}\Vert_1 (-\frac{1}{6}\Vert\widetilde{\beta}_{\bm{\omega}^{-1}}\Vert_{2,1}+\frac{3}{2}\Vert{\beta_\ast}_{\bm{\omega}^{-1}}\Vert_{2,1})\\ &-\delta ^2\Vert\widetilde{\beta}_{\bm{\omega}^{-1}}\Vert_{2,1}^2+\delta^2 \Vert\widetilde{\beta}_{\bm{\omega}^{-1}}\Vert_{2,1}\Vert{\beta_\ast}_{\bm{\omega}^{-1}}\Vert_{2,1},
    \end{aligned}\end{equation}
where (a) comes from \eqref{ineedthis1} and \eqref{ineedthis2},
(b) comes from \eqref{ineedthis3},
(c) comes from $\Vert(\widetilde{\beta}^l-\beta_\ast^l)_{\bm{\omega}^{-1}}\Vert_{2,1}\leq \Vert\widetilde{\beta}_{\bm{\omega}^{-1}}\Vert_{2,1}+\Vert{\beta_\ast}_{\bm{\omega}^{-1}}\Vert_{2,1}$,
(d) comes from $2\Vert\bm{X}\widetilde{\beta} -\bm{Y}\Vert_1\geq\frac{5}{3}\Vert\bm{X}\widetilde{\beta} -\bm{Y}\Vert_1\geq  -\frac{5}{3}\Vert\bm{X}(\widetilde{\beta}-\beta_\ast)\Vert_1+\frac{5}{3}\Vert\bm{\epsilon}\Vert_1$,
(e) comes from the inequality that $\Vert\bm{X}(\widetilde{\beta}-\beta_\ast)\Vert_1\leq \sqrt{n}\Vert\bm{X}(\widetilde{\beta}-\beta_\ast)\Vert_2$ and $\Vert\bm{X}\widetilde{\beta} -\bm{Y}\Vert_1\leq\Vert\bm{X}(\widetilde{\beta}-\beta_\ast)\Vert_1+\Vert\bm{\epsilon}\Vert_1$.

Since the  inequality \eqref{expansionofgroupat} is a second-order inequality of variable $\Vert\bm{X}(\widetilde{\beta}-\beta_\ast)\Vert_2/\sqrt{n}$, then the associate discriminant should be equal to or larger than $0$, resulting in  
    \begin{equation*}\begin{aligned}
        \frac{1}{9}\delta^2(-11\Vert\widetilde{\beta}_{\bm{\omega}^{-1}}\Vert_{2,1}^2+66\Vert{\beta_\ast}_{\bm{\omega}^{-1}}\Vert_{2,1}\Vert\widetilde{\beta}_{\bm{\omega}^{-1}}\Vert_{2,1}&+9\Vert{\beta_\ast}_{\bm{\omega}^{-1}}\Vert_{2,1}^2)\\
    &+\frac{\delta}{3n}\Vert\bm{\epsilon}\Vert_1(-2\Vert\widetilde{\beta}_{\bm{\omega}^{-1}}\Vert_{2,1}+18\Vert{\beta_\ast}_{\bm{\omega}^{-1}}\Vert_{2,1})\geq 0,\end{aligned}\end{equation*}
    from which we could conclude that 
    \[ \Vert\widetilde{\beta}_{\bm{\omega}^{-1}}\Vert_{2,1}\leq9\Vert{\beta_\ast}_{\bm{\omega}^{-1}}\Vert_{2,1}. \]

\end{proof}
\textbf{Then, we proceed to prove Theorem \ref{mainresult2}.}
\begin{proof}
  We write the objective function in \eqref{groupat} in the matrix norm and then have that 
    \begin{equation}
    \begin{aligned}
        &\frac{1}{2n}\Vert \bm{y}-\bm{X}\widetilde{\beta}\Vert_1^2+\frac{1}{2}\delta^2 \Vert\widetilde{\beta}_{\bm{\omega}^{-1}}\Vert_{2,1}^2+\frac{\delta}{n}\Vert \bm{y}-\bm{X}\widetilde{\beta}\Vert\Vert\widetilde{\beta}_{\bm{\omega}^{-1}}\Vert_{2,1}\\
        \leq& \frac{1}{2n}\Vert \bm{y}-\bm{X}\beta_\ast\Vert_1^2+\frac{1}{2}\delta^2 \Vert{\beta_\ast}_{\bm{\omega}^{-1}}\Vert_{2,1}^2+\frac{\delta}{n}\Vert \bm{y}-\bm{X}\beta_\ast\Vert_1\Vert{\beta_\ast}_{\bm{\omega}^{-1}}\Vert_{2,1}.
        \end{aligned}
    \end{equation}
    
    In this way, we have the following reformulation:
 \begin{equation}
     \begin{aligned}\label{groupineq}
        &\frac{1}{2n}\Vert \bm{X}(\widetilde{\beta}-\beta_\ast)\Vert_2^2
        \\
        &\leq \frac{1}{n}\bm{\epsilon}^\top \bm{X}(\widetilde{\beta}-\beta_\ast)+\frac{\delta}{n}\Vert \bm{X}(\widetilde{\beta}-\beta_\ast)\Vert_1\Vert\widetilde{\beta}_{\bm{\omega}^{-1}}\Vert_{2,1}+\frac{\delta}{n}\Vert \bm{\epsilon}\Vert_1\left(\Vert{\beta_\ast}_{\bm{\omega}^{-1}}\Vert_{2,1}-\Vert\widetilde{\beta}_{\bm{\omega}^{-1}}\Vert_{2,1}\right)\\
        &+\frac{1}{2}\delta^2 \Vert{\beta_\ast}_{\bm{\omega}^{-1}}\Vert_{2,1}^2-\frac{1}{2}\delta^2 \Vert\widetilde{\beta}_{\bm{\omega}^{-1}}\Vert_{2,1}^2\\
        &\leq \frac{\delta}{2n}\Vert\bm{\epsilon}\Vert_1 \Vert(\widetilde{\beta}-\beta_\ast)_{\bm{\omega}^{-1}}\Vert_{2,1}+\frac{\delta}{\sqrt{n}}\Vert \bm{X}(\widetilde{\beta}-\beta_\ast)\Vert_2\Vert\widetilde{\beta}_{\bm{\omega}^{-1}}\Vert_{2,1}+\frac{\delta}{n}\Vert \bm{\epsilon}\Vert_1\left(\Vert{\beta_\ast}_{\bm{\omega}^{-1}}\Vert_{2,1}-\Vert\widetilde{\beta}_{\bm{\omega}^{-1}}\Vert_{2,1}\right)\\
        &+\frac{1}{2}\delta^2 \Vert{\beta_\ast}_{\bm{\omega}^{-1}}\Vert_{2,1}^2-\frac{1}{2}\delta^2\Vert\widetilde{\beta}_{\bm{\omega}^{-1}}\Vert_{2,1}^2,
        \end{aligned}
    \end{equation} 
where the last inequality comes from \eqref{ineedthis3} and $\Vert\bm{X}(\widetilde{\beta}-\beta_\ast)\Vert_1\leq \sqrt{n}\Vert\bm{X}(\widetilde{\beta}-\beta_\ast)\Vert_2$.

Similar to the proof of Theorem \ref{mainresult}, we begin to give the upper bound of the prediction error. Two cases should be discussed.

\textbf{First Case:}  \begin{equation}\label{group1}\Vert(\widetilde{\beta}-\beta_\ast)_{\bm{\omega}^{-1}}\Vert_{2,1}+2\Vert{\beta_\ast}_{\bm{\omega}^{-1}}\Vert_{2,1}-2\Vert\widetilde{\beta}_{\bm{\omega}^{-1}}\Vert_{2,1}\leq 0.\end{equation}

In this case, we have that 
\begin{equation}\label{group2}\Vert{\beta_\ast}_{\bm{\omega}^{-1}}\Vert_{2,1} -\Vert\widetilde{\beta}_{\bm{\omega}^{-1}}\Vert_{2,1} \leq \Vert(\widetilde{\beta}-\beta_\ast)_{\bm{\omega}^{-1}}\Vert_{2,1}+2\Vert{\beta_\ast}_{\bm{\omega}^{-1}}\Vert_{2,1}-2\Vert\widetilde{\beta}_{\bm{\omega}^{-1}}\Vert_{2,1}\leq 0.\end{equation}

It follow from \eqref{groupineq} that
 \begin{equation}
     \begin{aligned}\label{groupeqq}
        &\frac{1}{2n}\Vert \bm{X}(\widetilde{\beta}-\beta_\ast)\Vert_2^2
        \\
       &\leq \frac{\delta}{2n}\Vert\bm{\epsilon}\Vert_1 \Vert(\widetilde{\beta}-\beta_\ast)_{\bm{\omega}^{-1}}\Vert_{2,1}+\frac{\delta}{\sqrt{n}}\Vert \bm{X}(\widetilde{\beta}-\beta_\ast)\Vert_2\Vert\widetilde{\beta}_{\bm{\omega}^{-1}}\Vert_{2,1}+\frac{\delta}{n}\Vert \bm{\epsilon}\Vert_1\left(\Vert{\beta_\ast}_{\bm{\omega}^{-1}}\Vert_{2,1}-\Vert\widetilde{\beta}_{\bm{\omega}^{-1}}\Vert_{2,1}\right)\\
        &+\frac{1}{2}\delta^2 \Vert{\beta_\ast}_{\bm{\omega}^{-1}}\Vert_{2,1}^2-\frac{1}{2}\delta^2\Vert\widetilde{\beta}_{\bm{\omega}^{-1}}\Vert_{2,1}^2,\\
        &=\frac{\delta}{\sqrt{n}}\Vert \bm{X}(\widetilde{\beta}-\beta_\ast)\Vert_2\Vert\widetilde{\beta}_{\bm{\omega}^{-1}}\Vert_{2,1}+\frac{\delta}{2n}\Vert \bm{\epsilon}\Vert_1\left(\Vert(\widetilde{\beta}-\beta_\ast)_{\bm{\omega}^{-1}}\Vert_{2,1}+2\Vert{\beta_\ast}_{\bm{\omega}^{-1}}\Vert_{2,1}-2\Vert\widetilde{\beta}_{\bm{\omega}^{-1}}\Vert_{2,1}\right)\\
        &+\frac{1}{2}\delta^2 \Vert{\beta_\ast}_{\bm{\omega}^{-1}}\Vert_{2,1}^2-\frac{1}{2}\delta^2\Vert\widetilde{\beta}_{\bm{\omega}^{-1}}\Vert_{2,1}^2,\\
        &\leq\frac{\delta}{\sqrt{n}}\Vert \bm{X}(\widetilde{\beta}-\beta_\ast)\Vert_2\Vert\widetilde{\beta}_{\bm{\omega}^{-1}}\Vert_{2,1},
    \end{aligned}
    \end{equation} 
    where the last inequality comes from \eqref{group1} and \eqref{group2}.

Notice we have that 
\begin{equation}\label{someupperbound} \Vert{\beta_\ast}_{\bm{\omega}^{-1}}\Vert_{2,1}=\sum_{l\in J} \frac{1}{\omega_l} \Vert \beta_\ast^l\Vert_2\leq \sqrt{\sum_{l\in J} \frac{1}{\omega_l^2}}\Vert\beta_\ast\Vert_2.\end{equation}
    Lemma \ref{upperboundofweighted},  \eqref{groupeqq} and \eqref{someupperbound} indicate that 
       \begin{equation}\label{case1resultgroup}
     \begin{aligned}
        &\frac{1}{2n^2}\Vert \bm{X}(\widehat{\beta}-\beta_\ast)\Vert_2^2
        \leq162\delta^2 \Vert{\beta_\ast}_{\bm{\omega}^{-1}}\Vert_{2,1}^2\leq 162\delta^2\sum_{l\in J} \frac{1}{\omega_l^2}\Vert\beta_\ast\Vert_2^2.
    \end{aligned}
    \end{equation} 

\textbf{Second Case:}  \[\Vert(\widetilde{\beta}-\beta_\ast)_{\bm{\omega}^{-1}}\Vert_{2,1}+2\Vert{\beta_\ast}_{\bm{\omega}^{-1}}\Vert_{2,1}-2\Vert\widetilde{\beta}_{\bm{\omega}^{-1}}\Vert_{2,1}\geq 0.\]
    
Notice we have that 
    \[  \Vert(\widetilde{\beta}-\beta_\ast)_{\bm{\omega}^{-1}}\Vert_{2,1}= \sum_{l=1}^{L}\frac{1}{\omega_l}\Vert\widetilde{\beta}^l-\beta_\ast^l\Vert_{2}=\sum_{l\in J^c }\frac{1}{\omega_l}\Vert(\widetilde{\beta}-\beta_\ast)^l\Vert_{2}+\sum_{l\in J}\frac{1}{\omega_l}\Vert(\widetilde{\beta}-\beta_\ast)^l\Vert_{2},\]

    \begin{equation*}
    \begin{aligned}
    \Vert{\beta_\ast}_{\bm{\omega}^{-1}}\Vert_{2,1}-\Vert\widetilde{\beta}_{\bm{\omega}^{-1}}\Vert_{2,1}
    =&\sum_{l \in J}\frac{1}{\omega_l}\Vert \beta_\ast^l\Vert_2-\sum_{l=1}^L\frac{1}{\omega_l}\Vert\widetilde{\beta}^l\Vert_2\\
    =&
    \sum_{l\in J}\frac{1}{\omega_l}\Vert \beta_\ast^l\Vert_2-
    \left(\sum_{l\in  J}\frac{1}{\omega_l} \Vert\widetilde{\beta}^l\Vert_2+\sum_{l\in J^c}\frac{1}{\omega_l}\Vert(\widetilde{\beta}-\beta_\ast)^l\Vert_2\right)\\
    \leq& \sum_{l\in  J}\frac{1}{\omega_l}\Vert(\widetilde{\beta}-\beta_\ast)^l\Vert_2-\sum_{l\in J^c}\frac{1}{\omega_l}\Vert(\widetilde{\beta}-\beta_\ast)^l\Vert_2.
      \end{aligned}
    \end{equation*}
    
If we let $\widetilde{v}=\widetilde{\beta}-\beta_\ast$,then we have that 
\begin{equation}\label{gre}3 \sum_{l\in J}\frac{1}{\omega_l}\Vert\widetilde{v}^l\Vert_2-\sum_{l\in J^c}\frac{1}{\omega_l}\Vert\widetilde{v}^l\Vert_2\geq \Vert(\widetilde{\beta}-\beta_\ast)_{\bm{\omega}^{-1}}\Vert_{2,1}+2\Vert{\beta_\ast}_{\bm{\omega}^{-1}}\Vert_{2,1}-2\Vert\widetilde{\beta}_{\bm{\omega}^{-1}}\Vert_{2,1}\geq 0,\end{equation}
indicating
\[ \sum_{l\in J^c}\frac{1}{\omega_l}\Vert\widetilde{v}^l\Vert_2\leq 3 \sum_{l\in J}\frac{1}{\omega_l}\Vert\widetilde{v}^l\Vert_2.\]
In this way, the $\GRE(g,3)$ condition can be applied.

We also have the following from \eqref{groupineq}
 \begin{equation*}
     \begin{aligned}
        &\frac{1}{2n}\Vert \bm{X}(\widetilde{\beta}-\beta_\ast)\Vert_2^2
        \\
          &\leq \frac{\delta}{n}\Vert \bm{X}(\widetilde{\beta}-\beta_\ast)\Vert_1\Vert\widetilde{\beta}_{\bm{\omega}^{-1}}\Vert_{2,1}+\frac{\delta}{2n}\Vert \bm{\epsilon}\Vert_1\left(\Vert{\beta_\ast}_{\bm{\omega}^{-1}}\Vert_{2,1}-\Vert\widetilde{\beta}_{\bm{\omega}^{-1}}\Vert_{2,1}+2\Vert(\widetilde{\beta}-\beta_\ast)_{\bm{\omega}^{-1}}\Vert_{2,1}\right)\\
        &+\frac{1}{2}\delta^2 \Vert{\beta_\ast}_{\bm{\omega}^{-1}}\Vert_{2,1}^2-\frac{1}{2}\delta^2\Vert\widetilde{\beta}_{\bm{\omega}^{-1}}\Vert_{2,1}^2\\
        &\overset{(a)}{\leq} \frac{1}{4n}\Vert \bm{X}(\widetilde{\beta}-\beta_\ast)\Vert_2^2+\frac{3\delta}{2n}\frac{\Vert\bm{\epsilon}\Vert_1}{n}  \sum_{l\in J}\frac{1}{\omega_l}\Vert\widetilde{v}^l\Vert_2+\frac{1}{2}\delta^2 \Vert{\beta_\ast}_{\bm{\omega}^{-1}}\Vert_{2,1}^2+\frac{1}{2}\delta^2\Vert\widetilde{\beta}_{\bm{\omega}^{-1}}\Vert_{2,1}^2,\\
        &\overset{(b)}{\leq}\frac{1}{4n}\Vert \bm{X}(\widetilde{\beta}-\beta_\ast)\Vert_2^2+\frac{3\delta}{2n}\frac{\Vert\bm{\epsilon}\Vert_1}{n} \sqrt{ \sum_{l\in J} \frac{1}{\omega_l^2}}\Vert \widetilde{v}_{G_J}\Vert_2+\frac{1}{2}\delta^2 \Vert{\beta_\ast}_{\bm{\omega}^{-1}}\Vert_{2,1}^2+\frac{1}{2}\delta^2\Vert\widetilde{\beta}_{\bm{\omega}^{-1}}\Vert_{2,1}^2
        \end{aligned}
    \end{equation*} 
    where (a) comes from \eqref{gre}, and (b) comes from  \[\sum_{l\in J}\frac{1}{\omega_l}\Vert\widetilde{v}^l\Vert_2\leq \sqrt{ \sum_{l\in J} \frac{1}{\omega_l^2}}\Vert \widetilde{v}_{G_J}\Vert_2.\]
    
It follows from $\GRE(g,3)$ and Lemma \ref{upperboundofweighted} that
 \begin{equation*}
     \begin{aligned}
        \frac{1}{4n}\Vert \bm{X}(\widetilde{\beta}-\beta_\ast)\Vert_2^2\leq  \frac{3\delta}{2\sqrt{n}}\sqrt{ \sum_{l\in J} \frac{1}{\omega_l^2}}\frac{1}{\kappa(g,3)}\frac{\Vert\bm{\epsilon}\Vert_1}{n} \Vert \bm{X}(\widetilde{\beta}-\beta_\ast)\Vert_2+41\delta^2\sum_{l\in J} \frac{1}{\omega_l^2}\Vert\beta_\ast\Vert_2^2.
    \end{aligned}
    \end{equation*} 
    
Then, we could have that
\begin{equation}\label{case2resultgroup}\frac{1}{2n}\Vert \bm{X}(\widetilde{\beta}-\beta_\ast)\Vert_2^2 \leq 3\frac{\delta^2 }{\kappa^2(g,3)}  \sum_{l\in J} \frac{1}{\omega_l^2} \max\left\{   9\left(\frac{\Vert\bm{\epsilon}\Vert_1}{n}\right)^2,164\kappa^2(g,3)\Vert\beta_\ast\Vert_2^2\right\}\end{equation}

Combining \eqref{case1resultgroup} and \eqref{case2resultgroup}, we have that 
\[\frac{1}{2n}\Vert \bm{X}(\widetilde{\beta}-\beta_\ast)\Vert_2^2\leq 3\delta^2  \sum_{l\in J} \frac{1}{\omega_l^2}\max\left\{   \frac{9}{\kappa^2(g,3)}\left(\frac{\Vert\bm{\epsilon}\Vert_1}{n}\right)^2,164\Vert\beta_\ast\Vert_2^2\right\}. \]      \end{proof}
\section{Proof of Corollary \ref{grouphighprob1}}
It follows from Lemma 3.1 in \cite{lounici2011oracle} that 
\[\frac{2}{n}\Vert ( \bm{X}^\top\bm{\epsilon})^l\Vert_2\leq\frac{2\sigma}{\sqrt{n}}\sqrt{\tr(\Psi_l)+2\vertiii{\Psi_l}(4\log L+\sqrt{2p_l\log L})} \]
holds with a probability greater than $1-2/L$, $\tr(\Psi_l)$ denotes the trace of $\Psi_l$, and $\vertiii{\Psi_l}$ denotes the maximum eigenvalue of $\Psi_l$. Since $\Psi_l=I_{p_l\times p_l}$, we have that $\vertiii{\Psi_l}=1$ and $\tr(\Psi_l)=p_l$.
Consequently, we have that 
\[\frac{2}{n}\Vert ( \bm{X}^\top\epsilon)^l\Vert_2\leq\frac{2\sigma}{\sqrt{n}}\sqrt{p_l+2(4\log L+\sqrt{2p_l\log L})}\leq \frac{2\sigma}{\sqrt{n}}\sqrt{3p_l+9\log L},\]
holds with a probability greater than $1-2/L$.

Also, it follows from the proof of Corollary \ref{highprob} that 
\[ \sigma\left(\sqrt{\frac{2}{\pi}} -\frac{1}{10}\right)\leq \frac{1}{n}\Vert\bm{\epsilon}\Vert_1\]
holds with a probability greater $1-2\exp\left( -C_1n\right)$.
Suppose we let 
\[\frac{\delta}{\omega_l}=\frac{2}{\sqrt{\frac{2}{\pi}}-\frac{1}{10}}\sqrt{\frac{3p_l+9\log L}{n}},\]
 we have that  \[\frac{2\Vert ( \bm{X}^\top\bm{\epsilon})^l\Vert_2}{\Vert\bm{\epsilon}\Vert_1}\leq \frac{\delta}{\omega_l}.\]
 holds with a probability greater than $1-2\exp\left( -C_1n\right)-2/L$.

It follows from Theorem \ref{mainresult2} that 
 \begin{equation*}
 \begin{aligned}
 \frac{1}{2n}\Vert \bm{X}(\widetilde{\beta}-\beta_\ast)\Vert_2^2&\leq  48\delta^2  \sum_{l\in J} \frac{1}{\omega_l^2}\max\left\{   \frac{9}{\kappa^2(g,3)}\left(\frac{\Vert\bm{\epsilon}\Vert_1}{n}\right)^2,164\Vert\beta_\ast\Vert_2^2\right\}\\
 &\leq 48 \sum_{l\in J}\frac{3p_l+9\log L}{n}\max\left\{   \frac{9}{\kappa^2(s,3)}\left(\frac{\Vert\bm{\epsilon}\Vert_1}{n}\right)^2,164\Vert\beta_\ast\Vert_2^2\right\}\\
 &= 432\frac{\vert G_J\vert +g \log L} {n}\max\left\{   \frac{9}{\kappa^2(s,3)}\left(\frac{\Vert\bm{\epsilon}\Vert_1}{n}\right)^2,164\Vert\beta_\ast\Vert_2^2\right\}.\end{aligned}\end{equation*}   

\section{Proof of Corollary \ref{highprob2}}

Corollary \ref{highprob2} is straightforward due to Corollary \ref{grouphighprob1} and the upper bound arguments in the proof in  Corollary \ref{highprobsim}.

\end{document}